\titleformat*{\paragraph}{\itshape}
\let\amstexbig\big
\def\newbig#1{
  \ifx#1|
    \expandafter\@firstoftwo
  \else
    \expandafter\@secondoftwo
  \fi
  {\big@bar}
  {\amstexbig{#1}}
}
\def\big@bar{\bBigg@{1.1}|}
\def\SZ{\mathscr{Z}}
\def\CH{\mathcal{H}}
\def\CI{\mathcal{I}}
\def\CO{\mathcal{O}}
\def\BI{\mathbb{I}}
\def\BN{\mathbb{N}}
\def\BR{\mathbb{R}}
\def\BNz{\BN_{0}}
\def\BfA{\mathbf{A}}
\def\BfI{\mathbf{I}}
\def\BfK{\mathbf{K}}
\def\BfM{\mathbf{M}}
\def\BfQ{\mathbf{Q}}
\def\BfW{\mathbf{W}}
\def\BfX{\mathbf{X}}
\def\alpb{\boldsymbol{\alpha}}
\def\vepsb{\boldsymbol{\varepsilon}}
\def\bde{\boldsymbol{e}}
\def\bdc{\boldsymbol{c}}
\def\bdf{\boldsymbol{f}}
\def\bdh{\boldsymbol{h}}
\def\bdu{\boldsymbol{u}}
\def\bdv{\boldsymbol{v}}
\def\bdx{\boldsymbol{x}}
\def\bdy{\boldsymbol{y}}
\def\bdz{\boldsymbol{z}}
\def\BI{\mathrm{BI}}
\def\up{\mathrm{up}}
\def\correc{\mathcal{A}}
\def\correcT{\mathfrak{a}}
\def\coneC{\mathscr{C}}
\def\up{\mathrm{up}}
\def\one{\mathds{1}}
\DeclareMathOperator{\vspan}{span}
\DeclareMathOperator{\sign}{sign}
\DeclareMathOperator{\coni}{coni}
\DeclarePairedDelimiterX\mip[1]\langle\rangle{\mipargs{#1}}
\NewDocumentCommand{\mipargs}{>{\SplitArgument{1}{,}}m }
 {\mipargsaux#1}
\NewDocumentCommand{\mipargsaux}{ m m }
{\ifblank{#1}{\cdot}{#1}\nonscript\,\delimsize\vert\nonscript\,\mathopen{}\ifblank{#2}{\cdot}{#2}}
\theoremstyle{plain}
\newtheorem{theorem}{Theorem}[section]
\newtheorem{corollary}{Corollary}[section]
\newtheorem{lemma}{Lemma}[section]
\theoremstyle{definition}
\newtheorem{remark}{Remark}[section]
\newtheorem{example}{Example}[section]
\newcommand{\fin} {\hfill\hbox{$\triangleleft$}}
\title{Rescaling and unconstrained minimisation of convex quadratic maps}
\author{Alexandra \textsc{Zverovich}\footnotemark[1]\qquad  
Matthew \textsc{Hutchings}\footnotemark[1]\\
Bertrand \textsc{Gauthier}\footnotemark[1]}
\date{} 
\newcommand\shorttitle{Rescaling and quadratic minimisation}
\newcommand\authors{A. \textsc{Zverovich}, M. \textsc{Hutchings} and B. \textsc{Gauthier} }
\begin{document}

\maketitle

\renewcommand{\thefootnote}{\fnsymbol{footnote}}
\footnotetext[1]{Cardiff University, School of Mathematics\\
\hspace*{1.4em}\hspace*{1ex}Abacws, Senghennydd Road, Cardiff, CF24 4AG, United Kingdom\\
\hspace*{1.4em}\hspace*{1ex}{ZverovichA@cardiff.ac.uk}, 
{HutchingsM1@cardiff.ac.uk}, 
{GauthierB@cardiff.ac.uk}}
\renewcommand{\thefootnote}{\arabic{footnote}}

\begin{abstract}
We investigate the properties of a class of piecewise-fractional maps arising from the introduction of
an invariance under rescaling into convex quadratic maps. 
The subsequent maps are quasiconvex,   
and pseudoconvex on specific convex cones; 
they can be optimised via exact line search along admissible directions, 
and the iterates then inherit a bidimensional optimality property. 
We study the minimisation of such relaxed maps 
via coordinate descents with gradient-based rules,  
placing a special emphasis on coordinate directions 
verifying a maximum-alignment property
in the reproducing kernel Hilbert spaces related to the underlying positive-semidefinite matrices.
In this setting, we illustrate that accounting for the optimal rescaling of the iterates can in certain situations
substantially accelerate the unconstrained minimisation of convex quadratic maps. 
\end{abstract}

\noindent\textbf{Keywords:}
unconstrained quadratic programs, 
generalised convexity, 
coordinate descent, 
asymptotic acceleration, 
reproducing kernel Hilbert spaces. 

\vspace{0.25\baselineskip}
\noindent\textbf{Mathematics Subject Classification:}
26B25, 
65F10, 
90C20. 

\tableofcontents

\section{Introduction}
\label{sec:Introduction}
The unconstrained minimisation of convex quadratic maps is 
one of the most fundamental tasks in optimisation and scientific computing;
it is equivalent to the solving of systems of linear equations 
defined by symmetric positive-semidefinite (SPSD) matrices, 
and as such plays a central role in numerical linear algebra. 
Although such problems can be solved via direct approaches 
(using Cholesky decomposition, for instance), 
the time and space complexity of direct solvers often prevents 
their application to problems of very large scale.  
In such situations, a common alternative consists of relying on iterative solvers;  
classical representatives of this type of approach
are the Gauss-Seidel or conjugate-gradient (CG) methods, for instance 
(see e.g. \cite{nocedal1999numerical, saad2003iterative, chong2004introduction, allaire2008numerical, gower2015randomized, Hackbusch2016}). 

Although CG-based solvers enjoy excellent properties, 
their iterations involve matrix-vector products which, 
for a matrix of order $N$, have an $\CO(N^{2})$ worst-case time complexity. 
In contrast, strategies based on coordinate descent (CD), 
where a single coordinate is updated at a time via exact line search, 
have an $\CO(N)$ worst-case computational cost per iteration.  
This difference in complexity makes CD-based approaches interesting candidates for very large problems, 
especially when high precision is not required 
or when the problem structure 
allows for rapid convergence through selective coordinate updates, 
see e.g. \cite{lee2013efficient, ma2015convergence, hefny2017rows, gordon2023conjugate}. 

In this work, we investigate the properties of a class of CD-type solvers 
for unconstrained quadratic minimisation
which leverages the introduction of an invariance under rescaling 
into the underlying quadratic maps.
More precisely, rather than directly minimising a convex quadratic map $D:\BR^{N}\to\BR$, 
following \cite{hutchings2023energy}, 
we consider the \emph{relaxed map} 
${R:\BR^{N}\to\BR}$ such that  
\[
R(\bdx)=D(s_{\bdx}\bdx)=\min_{s\geqslant0}D(s\bdx), 
\bdx\in\BR^{N},   
\]
with $s_{\bdx}$ the \emph{optimal non-negative rescaling} of $\bdx$. 
The map $R$ is \emph{invariant under rescaling}, 
that is, it verifies $R(s\bdx)=R(\bdx)$, $s>0$. 
This map is an instance of a \emph{piecewise-fractional map}
(see e.g. \cite{horst2013handbook}); 
it is quasiconvex on $\BR^{N}$, 
and pseudoconvex on a specific convex cone 
(see Theorem~\ref{thm:RmapPseudoCvx}).  
It can be minimised via exact line search along admissible directions, 
and the iterates then inherit a bidimensional optimality property 
(see Theorem~\ref{thm:RmapOptSS}). 
The pseudoconvex component of $R$ relates to the ratio
between the squared linear term of $D$ 
and its quadratic term, 
see Section~\ref{sec:frameAndNot}. 

We study the unconstrained minimisation of $R$ 
via exact CD with gra\-dient-based rules, 
and compare the properties of the considered strategies 
with similar strategies applied to the minimisation of $D$.  
We place a special emphasis on the notion of \emph{$\CH$ coordinates},  
which correspond to the coordinate directions 
whose potentials (see Section~\ref{sec:frameAndNot}) 
align the most with the gradients of $D$ or $R$ 
in the reproducing kernel Hilbert space (RKHS)
related to the underlying SPSD matrix. 
In this framework, 
we illustrate that accounting for the optimal rescaling of the iterates
by minimising $R$ instead of $D$
can in certain situations substantially accelerate the unconstrained 
minimisation of convex quadratic maps.    
We in particular derive upper bounds on the convergence rates of the considered strategies
which support the existence, under specific conditions, of a rescaling-induced 
asymptotic acceleration phenomenon (see Theorem~\ref{thm:AsympCorr});  
the effectivity of this asymptotic acceleration is demonstrated on a series of examples. 
  
The manuscript is organised as follows. 
Section~\ref{sec:frameAndNot} introduces the general framework
and the main notations of the study. 
The properties of the relaxed map $R$ are investigated in Section~\ref{sec:PropOfR}, 
and Section~\ref{sec:CoordDescGradRule} discusses the minimisation of $D$ and $R$ 
via exact CD with gradient-based rules. 
Section~\ref{sec:Experiments} is dedicated to numerical experiments, 
and Section~\ref{sec:ConcluDiscuss} comprises a concluding discussion. 

\section{Framework and notations}
\label{sec:frameAndNot}
Throughout this note, we use the classical \emph{matrix notation} 
and identify a vector $\bdx\in\BR^{N}$, $N\in\BN$,  
as the $N\times 1$ column matrix defined by the coefficients 
of $\bdx$ in the canonical basis $\{\bde_{i}\}_{i\in[N]}$ of $\BR^{N}$; 
$[N]$ stands for the set of all integers between $1$ and $N$. 
Unless otherwise stated, we consider the standard topology of $\BR^{N}$.  
The transpose of a matrix $\BfM$ is denoted by $\BfM^{T}$, 
$\BfM^{\dag}$ is the pseudoinverse (Moore-Penrose inverse) of $\BfM$, 
and $\vspan\{\BfM\}$ is the linear space spanned by the columns of $\BfM$. 
For an SPSD matrix ${\BfA\in\BR^{N\times N}}$, 
we define the bilinear form $\mip[]{\bdx,\bdy}_{\BfA}=\bdx^{T}\BfA\bdy$, $\bdx$ and $\bdy\in\BR^{N}$,  
and we denote by $\|\bdx\|_{\BfA}=\mip[]{\bdx,\bdx}_{\BfA}^{1/2}$ the associated semi-norm.  
For $S\subseteq\BR^{N}$, $\coni(S)$ stands for the conical hull of $S$. 

\paragraph{Quadratic map.}
We consider an SPSD matrix $\BfQ\in\BR^{N\times N}$
and a vector $\bdc\in\vspan\{\BfQ\}$, with $N\in\BN$. 
Let ${\alpb\in\BR^{N}}$ be such that $\BfQ\alpb=\bdc$ 
(in practical situations, $\alpb$ is unknown); 
observe that 
$\alpb=\BfQ^{\dag}\bdc+\vepsb$, 
with $\vepsb\in\SZ=\{\bdx\in \BR^{N}  |\; \BfQ\bdx=0\}$.  

We define the convex quadratic map $D:\BR^{N}\to\BR$, 
given by
\begin{equation*}
D(\bdx)
=\bdx^{T}\BfQ\bdx - 2 \bdc^{T}\bdx  
+\bdc^{T}\alpb, 
\bdx\in\BR^{N}.     
\end{equation*}
Classically, the constant term 
$\bdc^{T}\alpb=\alpb^{T}\BfQ\alpb=\bdc^{T}\BfQ^{\dag}\bdc$ 
is introduced for analytical purposes only;  
it is such that  
\[
D(\bdx)
=\|\bdx-\alpb\|_{\BfQ}^{2}
=\|\BfQ\bdx-\bdc\|_{\BfQ^{\dag}}^{2}.    
\]  
For simplicity and without loss of generality, we assume that $\bdc\neq0$ (and so $\BfQ\neq0$), 
the case $\bdc=0$ being of no practical interest in the framework of this study. 

\paragraph{Relaxed map.}
Following \cite{hutchings2023energy}, 
we consider the map $R:\BR^{N}\to\BR$ defined as
\begin{align*}
R(\bdx)
=\min_{s\geqslant 0}D(s\bdx)
=\begin{dcases}
\text{$\bdc^{T}\alpb-(\bdc^{T}\bdx)^{2}/(\bdx^{T}\BfQ\bdx)$ if $\bdx\in\coneC$, }\\
\text{$\bdc^{T}\alpb$ otherwise,  }
\end{dcases}
\end{align*}
with $\coneC=\{\bdx \in \BR^{N}  | \; \bdc^{T}\bdx > 0\}$.  
From the Cauchy-Schwartz (CS) inequality, observe that 
$|\bdc^{T}\bdx|^{2}=|\alpb^{T}\BfQ\bdx|^{2}\leqslant(\alpb^{T}\BfQ\alpb)(\bdx^{T}\BfQ\bdx)$,  
so that $\coneC\cap\SZ=\emptyset$. 
We have 
\begin{align*}
R(\bdx)=D(s_{\bdx}\bdx), 
\quad\text{with}\quad
s_{\bdx}
=\begin{dcases}
\text{$(\bdc^{T}\bdx)/(\bdx^{T}\BfQ\bdx)$ if $\bdx\in\coneC$, }\\
\text{$0$ otherwise.  }  
\end{dcases}
\end{align*}
The relaxed map $R$ is invariant under rescaling, 
that is, ${R(s\bdx)=R(\bdx)}$, $\bdx\in\BR^{N}$ and $s>0$, 
and $s_{\bdx}\bdx$ corresponds to the 
optimal non-negative rescaling of $\bdx$.  
We have $0\leqslant R(\bdx)\leqslant \bdc^{T}\alpb$, 
$R(\bdx)\leqslant D(\bdx)$, 
and $R$ is maximum outside of $\coneC$. 
Observe that 
$R$ is an instance of a piecewise-fractional map 
(see e.g. \cite{horst2013handbook});   
for $N=1$, $R$ is piecewise-constant.  

To study the properties of $D$ and $R$, 
and more specifically their minimisation using gradient-based strategies,  
it is convenient to introduce the RKHS related to $\BfQ$. 

\paragraph{RKHS related to an SPSD matrix.}\label{rem:Q-RKHS}
The entries of the matrix $\BfQ$ characterise the kernel of an RKHS
of \mbox{$\BR$-valued} functions on $[N]$. 
This RKHS can be identified with the subspace ${\CH=\vspan\{\BfQ\}\subseteq\BR^{N}}$ 
endowed with the inner product  
$(\bdh,\bdf)\mapsto\mip[]{\bdh,\bdf}_{\BfQ^{\dag}}$, 
$\bdh$ and ${\bdf\in\CH}$, 
so that the RKHS norm corresponds 
to the restriction to $\CH$ of the seminorm $\|.\|_{\BfQ^{\dag}}$; 
see for instance \cite[Chapter~2]{paulsen2016introduction}. 
In this framework, for $\bdh\in\CH$, 
setting $\bdh=\BfQ\bdx$, $\bdx\in\BR^{N}$, 
the \emph{reproducing property} reads
\[
\mip[]{\BfQ\bde_{i},\bdh}_{\BfQ^{\dag}}
=\bde_{i}^{T}\BfQ\BfQ^{\dag}\BfQ\bdx
=\bde_{i}^{T}\bdh, i\in[N].   
\]
For $\bdu\in\BR^{N}$, we more generally have
$\mip[]{\BfQ\bdu,\bdh}_{\BfQ^{\dag}}
=\bdu^{T}\bdh$,     
and by analogy with the literature on the kernel embedding of measures,
we refer to $\BfQ\bdu$ as the \emph{potential} of $\bdu$ in $\CH$; 
we denote by $P_{\BfQ\bdu}$ the orthogonal projection from 
$\CH$ onto $\vspan\{\BfQ\bdu\}$.   
For $\bdx\in\coneC\cup\SZ$, we obtain 
$s_{\bdx}\BfQ\bdx=P_{\BfQ\bdx}\bdc$, and 
$R(\bdx)=\|\bdc-P_{\BfQ\bdx}\bdc\|_{\BfQ^{\dag}}^{2}$.  

\section{Properties of the relaxed map}
\label{sec:PropOfR}
The directional derivative $\Lambda(\bdx;\bdv)$ of $R$ 
at $\bdx\in\BR^{N}$ and along $\bdv\in\BR^{N}$ is 
\[
\Lambda(\bdx;\bdv)=
\lim_{t \to 0^{+}} \frac{1}{t} \big[R(\bdx + t\bdv) - R(\bdx)\big]
=\begin{dcases}
\text{$-\infty$ if $\bdx\in\SZ$ and $\bdv\in\coneC$, }\\
\text{$2s_{\bdx}\bdv^{T}(s_{\bdx}\BfQ\bdx - \bdc)$ otherwise. }  
\end{dcases}
\]
Observe that the invariance of $R$ under rescaling translates into the equality $\Lambda(\bdx;\bdx)=0$, $\bdx\in\BR^{N}$. 
The gradient (with respect to the Euclidean structure of $\BR^{N}$) 
of $R$ at ${\bdx\not\in\SZ}$ is ${\nabla R(\bdx)=2s_{\bdx}(s_{\bdx}\BfQ\bdx - \bdc)}$,  
and we have $\nabla R(\bdx)\in\CH$. 

\begin{remark}\label{rem:HessianOfR}
The Hessian of $R$ at $\bdx\in\coneC$ is 
\[
\nabla^{2}R(\bdx)
=2\Big[s_{\bdx}^{2}\BfQ
-\frac{1}{\bdx^{T}\BfQ\bdx}(2s_{\bdx}\BfQ\bdx-\bdc)(2s_{\bdx}\BfQ\bdx-\bdc)^{T}\Big];  
\]
this matrix admits at most one negative eigenvalue. 
\fin
\end{remark}

\begin{theorem}\label{thm:RmapPseudoCvx}
The map $R$ is quasiconvex on $\BR^{N}$, 
and pseudoconvex on the convex \mbox{cone $\coneC$}. 
\end{theorem}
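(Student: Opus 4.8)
The plan is to handle the two assertions by different means: quasiconvexity by describing the sublevel sets of $R$ explicitly, and pseudoconvexity on $\coneC$ by a direct computation with the gradient formula and the Cauchy--Schwarz inequality for $\mip[]{\cdot,\cdot}_{\BfQ}$.

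For quasiconvexity I would fix $\gamma\in\BR$ and analyse $L_{\gamma}=\{\bdx\in\BR^{N}\,|\,R(\bdx)\leqslant\gamma\}$. Since $0\leqslant R\leqslant\bdc^{T}\alpb$ on $\BR^{N}$, one has $L_{\gamma}=\emptyset$ for $\gamma<0$ and $L_{\gamma}=\BR^{N}$ for $\gamma\geqslant\bdc^{T}\alpb$, both convex. For $0\leqslant\gamma<\bdc^{T}\alpb$, put $\delta=\bdc^{T}\alpb-\gamma>0$; any point of $\BR^{N}\setminus\coneC$ has $R=\bdc^{T}\alpb>\gamma$, so $L_{\gamma}\subseteq\coneC$, and on $\coneC$ (where $\bdx^{T}\BfQ\bdx>0$ because $\coneC\cap\SZ=\emptyset$) the condition $R(\bdx)\leqslant\gamma$ reads $(\bdc^{T}\bdx)^{2}\geqslant\delta\,\bdx^{T}\BfQ\bdx$, equivalently $\bdc^{T}\bdx\geqslant\sqrt{\delta}\,\|\bdx\|_{\BfQ}$ since $\bdc^{T}\bdx>0$ there. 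Hence $L_{\gamma}=\{\bdx\,|\,\bdc^{T}\bdx\geqslant\sqrt{\delta}\,\|\bdx\|_{\BfQ}\}\cap\coneC$, which is the intersection of the superlevel set of the concave map $\bdx\mapsto\bdc^{T}\bdx-\sqrt{\delta}\,\|\bdx\|_{\BfQ}$ (a linear map minus a nonnegative multiple of a seminorm) with the open half-space $\coneC=\{\bdc^{T}\bdx>0\}$, hence convex.

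For pseudoconvexity I would first recall that $\coneC$ is open and convex and that $R$ is smooth on $\coneC$ by its explicit fractional form. Given $\bdx,\bdy\in\coneC$ with $\nabla R(\bdx)^{T}(\bdy-\bdx)\geqslant0$, I would use $\nabla R(\bdx)=2s_{\bdx}(s_{\bdx}\BfQ\bdx-\bdc)$ with $s_{\bdx}>0$, together with the identity $(s_{\bdx}\BfQ\bdx-\bdc)^{T}\bdx=s_{\bdx}\bdx^{T}\BfQ\bdx-\bdc^{T}\bdx=0$, to reduce this inequality to $(\bdc^{T}\bdx)(\bdx^{T}\BfQ\bdy)\geqslant(\bdc^{T}\bdy)(\bdx^{T}\BfQ\bdx)$; since $\bdc^{T}\bdx,\bdc^{T}\bdy,\bdx^{T}\BfQ\bdx>0$ this forces $\bdx^{T}\BfQ\bdy>0$, so both sides are positive and may be squared. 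Combining the squared inequality with the Cauchy--Schwarz bound $(\bdx^{T}\BfQ\bdy)^{2}\leqslant(\bdx^{T}\BfQ\bdx)(\bdy^{T}\BfQ\bdy)$ and dividing out the positive factors $\bdx^{T}\BfQ\bdx$ and $\bdy^{T}\BfQ\bdy$ gives $(\bdc^{T}\bdy)^{2}/(\bdy^{T}\BfQ\bdy)\leqslant(\bdc^{T}\bdx)^{2}/(\bdx^{T}\BfQ\bdx)$, i.e.\ $R(\bdy)\geqslant R(\bdx)$, which is exactly pseudoconvexity on $\coneC$.

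The computations themselves are routine; the only genuinely delicate point is organising the quasiconvexity argument so that the set $\SZ$ (where $R$ is constant and $\BfQ$ is singular) is handled cleanly, which here is automatic since $\SZ\subseteq\{\bdc^{T}\bdx=0\}$ lies outside $\coneC$ and $R$ attains its maximum there, so $\SZ$ never meets a nontrivial sublevel set. In the pseudoconvexity step, the one non-mechanical move is recognising that once the gradient inequality is rewritten as a bilinear inequality it can be squared and fed into Cauchy--Schwarz.
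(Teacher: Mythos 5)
Your proposal is correct, but only half of it coincides with the paper's own argument. The pseudoconvexity part is essentially identical to the paper's: both start from $\nabla R(\bdx)^{T}(\bdy-\bdx)\geqslant 0$ (the paper phrases it via the directional derivative $\Lambda$), use $\bdx^{T}(s_{\bdx}\BfQ\bdx-\bdc)=0$ to reduce it to $(\bdc^{T}\bdx)(\bdy^{T}\BfQ\bdx)\geqslant(\bdx^{T}\BfQ\bdx)(\bdc^{T}\bdy)$, deduce $\bdy^{T}\BfQ\bdx>0$, square, and finish with Cauchy--Schwarz. The quasiconvexity part is where you genuinely diverge: the paper never examines sublevel sets; it argues from the variational definition, showing that for $\bdz=(1-t)\bdx+t\bdu$ there exist $s\geqslant0$ and $t'\in[0,1]$ with $s\bdz=(1-t')s_{\bdx}\bdx+t's_{\bdu}\bdu$ (a short case analysis on whether $\bdx$ and $\bdu$ lie in $\coneC$), whence $R(\bdz)\leqslant D(s\bdz)\leqslant(1-t')R(\bdx)+t'R(\bdu)\leqslant\max\{R(\bdx),R(\bdu)\}$ by convexity of $D$. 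You instead identify each nontrivial sublevel set $\{R\leqslant\gamma\}$, $0\leqslant\gamma<\bdc^{T}\alpb$, with $\{\bdx\,|\,\bdc^{T}\bdx\geqslant\sqrt{\delta}\,\|\bdx\|_{\BfQ}\}\cap\coneC$, which is convex as the intersection of a superlevel set of the concave map $\bdx\mapsto\bdc^{T}\bdx-\sqrt{\delta}\,\|\bdx\|_{\BfQ}$ with the half-space $\coneC$; your handling of the trivial ranges of $\gamma$ and of $\SZ$ is clean, and the intersection with $\coneC$ is indeed needed, since points of $\SZ$ satisfy the cone inequality with equality while $R=\bdc^{T}\alpb$ there. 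What each route buys: yours gives an explicit geometric description of the sublevel sets as truncated second-order-cone-type sets, at the price of relying on the explicit fractional formula for $R$; the paper's argument uses only $R=\min_{s\geqslant0}D(s\cdot)$ together with the convexity of $D$, so it needs no case split on $\gamma$ and would carry over verbatim to the rescaling-relaxation of any convex map, not just quadratic ones.
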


 A proof of Theorem~\ref{thm:RmapPseudoCvx} can be found in \cite{hutchings2023energy}; 
for completeness, a proof is reproduced in Appendix~\ref{sec:ProofThmPseudoCvx}. 
  
The forthcoming Theorem~\ref{thm:RmapOptSS} characterises
the directions along which $R$ can be minimised via exact line search. 
Notably, due to the invariance of $R$ under rescaling, 
the iterate of an exact line search from $\bdx$ along $\bdv$ 
minimises $R$ over $\vspan\{\bdx,\bdv\}$. 
To simplify the notations, for $\bdx$ and $\bdv\in\BR^{N}$, we introduce 
\[
\Upsilon(\bdx;\bdv)
=(\bdc^{T}\bdv)(\bdx^{T}\BfQ\bdx)-(\bdc^{T}\bdx)(\bdv^{T}\BfQ\bdx).  
\]  
Observe that for $\bdx\in\coneC$, we have $\Upsilon(\bdx;\bdv)=-\Lambda(\bdx;\bdv)(\bdx^{T}\BfQ\bdx)/(2s_{\bdx})$. 
A schematic representation of the situations described in Theorem~\ref{thm:RmapOptSS} 
is provided in Figure~\ref{fig:Illustr_Thm32}. 

\begin{theorem}\label{thm:RmapOptSS}
Consider $\bdx\in\coneC$ and $\bdv\in\BR^{N}$;  
set $\bdz_{t}=\bdx+t\bdv$, $t\in\BR$. 
If $\BfQ\bdx$ and $\BfQ\bdv$ are non-collinear, 
the following assertions hold.  
\begin{enumerate}[label=(\roman*)]
\item\label{item:OSSDesc} If $\Upsilon(\bdv;\bdx)>0$, 
then the function $t\mapsto R(\bdz_{t})$, $t\in\BR$, is minimum at 
\[
\tau=\Upsilon(\bdx;\bdv)/\Upsilon(\bdv;\bdx);
\]     
we in this case have $\bdz_{\tau}\in\coneC$ and $R(\bdz_{\tau})=\min\limits_{\bdz\in\vspan\{\bdx,\bdv\}}R(\bdz)$.  

\item\label{item:NonStopDesc} If $\Upsilon(\bdv;\bdx)\leqslant0$, 
then the function $t\mapsto R(\bdz_{t})$, $t\in\BR$, is monotonic, 
and  
\[
\inf_{t\in\BR}R(\bdz_{t})
=\min\{R(-\bdv),R(\bdv)\};
\]
in particular, if $\Upsilon(\bdv;\bdx)=0$, 
then $\min\limits_{\bdz\in\vspan\{\bdx,\bdv\}}R(\bdz)=\min\{R(-\bdv),R(\bdv)\}$. 
\end{enumerate}
If $\BfQ\bdx$ and $\BfQ\bdv$ are collinear, then the map $R$ is piecewise-constant 
over $\vspan\{\bdx,\bdv\}$, taking the values $R(0)$ or $R(\bdx)<R(0)$ 
(and $t\mapsto R(\bdz_{t})$ is thus minimum at $t=0$). 
\end{theorem}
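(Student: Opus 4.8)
The plan is to reduce everything to a careful analysis of the single-variable function $\varphi(t)=R(\bdz_t)$ where $\bdz_t=\bdx+t\bdv$. Since $R$ is invariant under rescaling, $\varphi(t)$ depends only on the line $\vspan\{\bdz_t\}$, and as $t$ ranges over $\BR$ together with the "point at infinity" corresponding to the direction $\bdv$, the lines $\vspan\{\bdz_t\}$ sweep out precisely the pencil of lines through the origin in $\vspan\{\bdx,\bdv\}$ (this is where the non-collinearity of $\BfQ\bdx$ and $\BfQ\bdv$ matters: it guarantees $\bdx,\bdv$ are themselves linearly independent, so $\vspan\{\bdx,\bdv\}$ is genuinely two-dimensional and $R$ restricted to it is not trivially constant). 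Hence minimising $\varphi$ over $t\in\BR$ is the same as minimising $R$ over $\vspan\{\bdx,\bdv\}$, up to checking the one direction $\pm\bdv$ not attained by a finite $t$ — which accounts for the $\min\{R(-\bdv),R(\bdv)\}$ term appearing in case~\ref{item:NonStopDesc}.

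Next I would compute $\varphi$ explicitly on the cone. For $\bdz_t\in\coneC$ we have $\varphi(t)=\bdc^T\alpb-(\bdc^T\bdz_t)^2/(\bdz_t^T\BfQ\bdz_t)$; writing $a=\bdc^T\bdx$, $a'=\bdc^T\bdv$, and expanding the quadratic form $\bdz_t^T\BfQ\bdz_t=\bdx^T\BfQ\bdx+2t\,\bdv^T\BfQ\bdx+t^2\bdv^T\BfQ\bdv$, minimising $\varphi$ amounts to maximising the rational function $(a+ta')^2/(\bdx^T\BfQ\bdx+2t\,\bdv^T\BfQ\bdx+t^2\bdv^T\BfQ\bdv)$. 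Setting its derivative to zero and clearing denominators yields, after simplification, a linear equation in $t$ whose solution is exactly $\tau=\Upsilon(\bdx;\bdv)/\Upsilon(\bdv;\bdx)$; the quantities $\Upsilon(\bdx;\bdv)$ and $\Upsilon(\bdv;\bdx)$ emerge naturally as the two coefficients once one uses the Cauchy–Schwarz-type relations from Section~\ref{sec:frameAndNot} to recognise cross terms. The sign of the second-order condition is governed by $\Upsilon(\bdv;\bdx)$: when $\Upsilon(\bdv;\bdx)>0$ the critical point is a genuine maximiser of the ratio, hence a minimiser of $\varphi$, giving~\ref{item:OSSDesc}; when $\Upsilon(\bdv;\bdx)\le 0$ the ratio has no interior maximum on the relevant interval, the derivative of $\varphi$ does not change sign, so $\varphi$ is monotone, giving~\ref{item:NonStopDesc}. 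To promote "$\varphi$ minimised at $\tau$ along the line" to "$R$ minimised over all of $\vspan\{\bdx,\bdv\}$", I would invoke pseudoconvexity of $R$ on $\coneC$ (Theorem~\ref{thm:RmapPseudoCvx}): the vanishing of the directional derivative $\Lambda(\bdz_\tau;\cdot)$ in the two independent directions $\bdx$ and $\bdv$ — one from rescaling invariance, $\Lambda(\bdz_\tau;\bdz_\tau)=0$, the other from the first-order condition along $\bdv$ — forces $\bdz_\tau$ to be a global minimiser of $R$ on the cone $\coneC\cap\vspan\{\bdx,\bdv\}$, and outside the cone $R$ is constant and larger, so $\bdz_\tau$ is optimal over the whole plane. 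I must also confirm $\bdz_\tau\in\coneC$, i.e. $\bdc^T\bdz_\tau>0$; this follows because a maximiser of $(\bdc^T\bdz_t)^2/\|\bdz_t\|_{\BfQ}^2$ with positive value cannot sit on the hyperplane $\bdc^T\bdz=0$, and continuity from $\bdx\in\coneC$ fixes the sign.

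The collinear case is separate and easy: if $\BfQ\bdv=\lambda\BfQ\bdx$ then for every $\bdz\in\vspan\{\bdx,\bdv\}$ one has $\BfQ\bdz\in\vspan\{\BfQ\bdx\}$, so $P_{\BfQ\bdz}\bdc=P_{\BfQ\bdx}\bdc$ whenever $\bdc^T\bdz\ne0$, whence $R(\bdz)=R(\bdx)$ on $\coneC\cap\vspan\{\bdx,\bdv\}$ and $R(\bdz)=R(0)=\bdc^T\alpb$ otherwise; the strict inequality $R(\bdx)<R(0)$ is just $\bdc^T\bdx>0$. Finally, for~\ref{item:NonStopDesc}, the boundary evaluation requires identifying $\lim_{t\to\pm\infty}R(\bdz_t)$ with $R(\pm\bdv)$, which is immediate from rescaling invariance applied to $\bdz_t/|t|\to\pm\bdv$ together with continuity of $R$ on $\coneC$, with the caveat that if $\bdv\notin\coneC\cup(-\coneC)$ the relevant limit is $\bdc^T\alpb$ and is attained in the limit rather than at a finite point — this edge-case bookkeeping, and the matching of one-sided limits to guarantee monotonicity (as opposed to a monotone-then-constant profile once the line exits $\coneC$), is the part that demands the most care. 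The main obstacle, then, is not any single computation but organising the case distinctions — inside versus outside $\coneC$ along the line, the sign of $\Upsilon(\bdv;\bdx)$, and the behaviour at infinity — into a coherent argument, and correctly leveraging pseudoconvexity to upgrade line-optimality to plane-optimality.
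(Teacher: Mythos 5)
Your overall route — direct one-variable calculus on $f(t)=R(\bdz_{t})$, identification of the critical point $\tau$, then pseudoconvexity at $\bdz_{\tau}$ to upgrade line-optimality to optimality over $\vspan\{\bdx,\bdv\}$ — is a legitimate alternative to the paper's organisation (which first determines the argmin ray $\{s\bdz_{\bdx,\bdv}\,|\,s>0\}$ via Lemma~\ref{lem:MinRSpan} and then intersects it with the line), and your derivative computation is correct: on $\{t\,|\,\bdz_{t}\in\coneC\}$ one indeed gets $f'(t)=2(\bdc^{T}\bdz_{t})\big(t\Upsilon(\bdv;\bdx)-\Upsilon(\bdx;\bdv)\big)/(\bdz_{t}^{T}\BfQ\bdz_{t})^{2}$, i.e.\ the map $g$ of \eqref{eq:DefgFunc}. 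The genuine gap is in assertion~\ref{item:NonStopDesc} when $\Upsilon(\bdv;\bdx)<0$: you assert that ``the derivative of $\varphi$ does not change sign'', but the linear factor $t\Upsilon(\bdv;\bdx)-\Upsilon(\bdx;\bdv)$ does vanish and change sign at $t=\tau$, so monotonicity hinges on showing that this critical point lies \emph{outside} the cone region, i.e.\ $\bdc^{T}\bdz_{\tau}\leqslant 0$. That is precisely the step to which the paper devotes its contradiction argument (a zero $t_{2}$ of $g$ with $\bdz_{t_{2}}\in\coneC$ would, by Lemmas~\ref{lem:MinRSpanZeroDir} and \ref{lem:MinRSpan}, place $\bdz_{t_{2}}$ on the argmin ray, which does not meet the line when $\Upsilon(\bdv;\bdx)\leqslant0$), and your sketch offers no substitute. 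The same issue underlies your claim $\bdz_{\tau}\in\coneC$ in assertion~\ref{item:OSSDesc}, where ``continuity fixes the sign'' is only a gesture. A clean patch covering both cases is the identity $(\bdc^{T}\bdx)\Upsilon(\bdv;\bdx)+(\bdc^{T}\bdv)\Upsilon(\bdx;\bdv)=\|(\bdc^{T}\bdx)\bdv-(\bdc^{T}\bdv)\bdx\|_{\BfQ}^{2}>0$ (positivity because $\BfQ\bdx$, $\BfQ\bdv$ are non-collinear and $\bdc^{T}\bdx>0$), which yields $\sign(\bdc^{T}\bdz_{\tau})=\sign\big(\Upsilon(\bdv;\bdx)\big)$; hence $\bdz_{\tau}\in\coneC$ exactly when $\Upsilon(\bdv;\bdx)>0$, and the derivative keeps a constant sign on the cone region when $\Upsilon(\bdv;\bdx)<0$ (when $\Upsilon(\bdv;\bdx)=0$ the factor is the constant $-\Upsilon(\bdx;\bdv)$, nonzero by strict Cauchy--Schwarz under non-collinearity).

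A second, smaller flaw is your opening reduction: you argue that $R(\bdz_{t})$ ``depends only on the line $\vspan\{\bdz_{t}\}$'' by invariance under rescaling, but the invariance is only under \emph{positive} rescaling, so $R(\bdz)$ and $R(-\bdz)$ generally differ and the values of $R$ on the opposite half-plane $\{-\bdz_{t}\}$ are not seen by $t\mapsto R(\bdz_{t})$. For assertion~\ref{item:OSSDesc} this is harmless, since you obtain plane-optimality from pseudoconvexity at $\bdz_{\tau}$; but the final sub-claim of assertion~\ref{item:NonStopDesc} (the value of $\min_{\bdz\in\vspan\{\bdx,\bdv\}}R(\bdz)$ when $\Upsilon(\bdv;\bdx)=0$), which your sketch never treats separately, needs either the paper's Lemma~\ref{lem:MinRSpan} (whose argmin ray is then spanned by $\bdv$) or the observation that the ratio $(\bdc^{T}\bdz)^{2}/(\bdz^{T}\BfQ\bdz)$ is even in $\bdz$, so that its supremum over the plane is already attained or approached among the directions $\{\bdz_{t}\}_{t\in\BR}\cup\{\pm\bdv\}$. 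With these two repairs, your argument goes through; the collinear case and the identification of the limits at infinity are handled adequately.
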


To prove Theorem~\ref{thm:RmapOptSS}, 
we rely on Lemmas~\ref{lem:MinRSpanZeroDir} and \ref{lem:MinRSpan}. 

\begin{lemma}\label{lem:MinRSpanZeroDir}
Consider $\bdx\in\coneC$ and $\bdv\in\BR^{N}$;  
if $\Lambda(\bdx;\bdv)=0$, then 
$R(\bdx)=\min\limits_{\bdz\in\vspan\{\bdx,\bdv\}}R(\bdz)$. 
\end{lemma}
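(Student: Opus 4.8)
\textbf{Proof plan for Lemma~\ref{lem:MinRSpanZeroDir}.}

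The plan is to reduce the minimisation of $R$ over the two-dimensional subspace $\vspan\{\bdx,\bdv\}$ to a statement about pseudoconvexity, using that $R$ is invariant under rescaling. Since $R$ is pseudoconvex on the convex cone $\coneC$ (Theorem~\ref{thm:RmapPseudoCvx}) and $\bdx\in\coneC$, the vanishing of the directional derivative $\Lambda(\bdx;\bdv)=0$ should force $R(\bdx)\leqslant R(\bdx+t\bdv)$ for all $t$ such that $\bdx+t\bdv$ stays in $\coneC$. The first step is therefore to recall that pseudoconvexity of $R$ on $\coneC$ means exactly: for $\bdz\in\coneC$, if $\Lambda(\bdx;\bdz-\bdx)\geqslant 0$ then $R(\bdz)\geqslant R(\bdx)$. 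Applying this with $\bdz=\bdx+t\bdv$ (when this point lies in $\coneC$), and noting that $\Lambda(\bdx;\cdot)$ is positively homogeneous in its second argument away from $\SZ$ so that $\Lambda(\bdx;t\bdv)$ has the sign of $t\Lambda(\bdx;\bdv)=0$, gives $R(\bdx+t\bdv)\geqslant R(\bdx)$ on $\vspan\{\bdx,\bdv\}\cap\coneC$.

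Next I would handle the points of $\vspan\{\bdx,\bdv\}$ that are \emph{not} in $\coneC$. For any $\bdz\in\BR^N$ we have the global bound $R(\bdz)\leqslant \bdc^T\alpb$, with equality precisely on the complement of $\coneC$; and since $\bdx\in\coneC$ we have the Cauchy--Schwartz-type strict inequality $R(\bdx)=\bdc^T\alpb-(\bdc^T\bdx)^2/(\bdx^T\BfQ\bdx)<\bdc^T\alpb$ (here $\bdx^T\BfQ\bdx>0$ because $\coneC\cap\SZ=\emptyset$, as observed in Section~\ref{sec:frameAndNot}). Hence for every $\bdz\in\vspan\{\bdx,\bdv\}\setminus\coneC$ we get $R(\bdz)=\bdc^T\alpb>R(\bdx)$, which combined with the previous paragraph yields $R(\bdx)\leqslant R(\bdz)$ for all $\bdz\in\vspan\{\bdx,\bdv\}$. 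Because $\bdx$ itself lies in the subspace, this shows $R(\bdx)=\min_{\bdz\in\vspan\{\bdx,\bdv\}}R(\bdz)$, as claimed.

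A subtlety to be careful about is the case $\bdv\in\SZ$ (or more generally $\bdx+t\bdv\in\SZ$ for some $t$): then the formula $\Lambda(\bdx;\bdv)=2s_{\bdx}\bdv^T(s_{\bdx}\BfQ\bdx-\bdc)$ still applies since $\bdx\notin\SZ$, but one should check that the points of the line through $\bdx$ in direction $\bdv$ that hit $\SZ$ are automatically outside $\coneC$ (indeed $\coneC\cap\SZ=\emptyset$), so they fall under the ``$R=\bdc^T\alpb$'' branch already treated. The only real content is the pseudoconvexity inequality, which is supplied by Theorem~\ref{thm:RmapPseudoCvx}; the rest is bookkeeping about which branch of the piecewise definition of $R$ each point of the line lies in, together with the elementary observation that $R$ is constant along rays so the infimum over the subspace is attained on the intersection with the closed cone $\overline{\coneC}$. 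I expect the main (minor) obstacle to be phrasing the homogeneity-of-$\Lambda$ step cleanly, i.e.\ making sure the sign bookkeeping for $t<0$ versus $t>0$ is handled by a single application of pseudoconvexity to the appropriate endpoint rather than by a case split.
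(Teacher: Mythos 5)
Your overall strategy (pseudoconvexity of $R$ on $\coneC$ combined with the fact that $R$ attains its maximal value $\bdc^{T}\alpb$ outside $\coneC$) is the right one, but as written there is a genuine gap in the step where you pass from the line to the plane. You apply pseudoconvexity only at points $\bdz=\bdx+t\bdv$, and then assert that this yields $R(\bdz)\geqslant R(\bdx)$ on all of $\vspan\{\bdx,\bdv\}\cap\coneC$. Positive rescalings of the affine line $\{\bdx+t\bdv\,|\,t\in\BR\}$ sweep out only the open half-plane $\{a\bdx+b\bdv\,|\,a>0\}$, so invariance under rescaling (your ``constant along rays'' remark) does not recover the points $a\bdx+b\bdv\in\coneC$ with $a\leqslant0$. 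Such points exist whenever $\bdc^{T}\bdv\neq0$: for instance, if $\bdc^{T}\bdv>0$, then $-\bdx+s\bdv\in\coneC$ for $s$ large. The boundary case $a=0$ (i.e.\ $\pm\bdv\in\coneC$) could be rescued by a limiting/continuity argument, but the wedge $a<0$ is simply not addressed by positive homogeneity of $\Lambda(\bdx;\cdot)$ along the single direction $\bdv$.

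The missing ingredient is precisely what the paper's one-line proof uses. Since $\bdx\in\coneC$ and $\coneC\cap\SZ=\emptyset$, the directional derivative at $\bdx$ is given by the linear map $\bdw\mapsto\Lambda(\bdx;\bdw)=2s_{\bdx}\bdw^{T}(s_{\bdx}\BfQ\bdx-\bdc)$, and invariance under rescaling gives $\Lambda(\bdx;\bdx)=0$ in addition to the hypothesis $\Lambda(\bdx;\bdv)=0$. Hence for \emph{every} $\bdz=a\bdx+b\bdv\in\vspan\{\bdx,\bdv\}$ one has $\Lambda(\bdx;\bdz-\bdx)=(a-1)\Lambda(\bdx;\bdx)+b\Lambda(\bdx;\bdv)=0$, so pseudoconvexity on $\coneC$ (Theorem~\ref{thm:RmapPseudoCvx}) applies directly to every such $\bdz$ lying in $\coneC$, with no restriction to the line and no sign bookkeeping in $t$; points of the span outside $\coneC$ are handled exactly as you do, since there $R(\bdz)=\bdc^{T}\alpb\geqslant R(\bdx)$. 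Replacing your line-plus-rescaling step by this linearity argument (i.e.\ by the explicit use of $\Lambda(\bdx;\bdx)=0$) closes the gap.
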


\begin{proof}
By invariance under rescaling, we also have
$\Lambda(\bdx;\bdx)=0$, and the result holds by pseudoconvexity.   
\end{proof}

\begin{lemma}\label{lem:MinRSpan}
Consider $\bdx$ and $\bdv\in\BR^{N}$;  
assume that $\BfQ\bdx$ and $\BfQ\bdv$ are non-collinear, 
and that either $\bdc^{T}\bdx\neq 0$ or $\bdc^{T}\bdv\neq0$. 
We have
\[
\arg\min\limits_{\bdz\in\vspan\{\bdx,\bdv\}}R(\bdz)
=\{s\bdz_{\bdx,\bdv}|s>0\},   
\]
where $\bdz_{\bdx,\bdv}\in\coneC$ is given by
$\bdz_{\bdx,\bdv}=\Upsilon(\bdv;\bdx)\bdx+\Upsilon(\bdx;\bdv)\bdv$. 
\end{lemma}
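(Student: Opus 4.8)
The plan is to reduce the two-dimensional minimisation of $R$ over $\vspan\{\bdx,\bdv\}$ to a one-dimensional problem via the rescaling invariance, and then to exploit pseudoconvexity on $\coneC$ together with the directional-derivative formula. First I would observe that any nonzero $\bdz\in\vspan\{\bdx,\bdv\}$ can be written, up to positive rescaling (which leaves $R$ unchanged), in one of the forms $\bdx + t\bdv$ for $t\in\BR$, or $\bdv$, or $-\bdv$ (the last two being the "directions at infinity" of the pencil). Since $R$ is continuous on $\BR^{N}\setminus\SZ$ and locally constant near any point of $\SZ$ with value $\bdc^{T}\alpb$ (its maximum), and since the hypothesis $\bdc^{T}\bdx\neq 0$ or $\bdc^{T}\bdv\neq 0$ guarantees that $\vspan\{\bdx,\bdv\}$ is not contained in $\SZ$, the minimum of $R$ over the span is attained and is strictly below $\bdc^{T}\alpb$; moreover it is attained at some $\bdz$ with $\bdc^{T}\bdz>0$, i.e.\ in $\coneC$. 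The candidate minimiser is then pinned down by the stationarity condition $\Lambda(\bdz;\bdw)=0$ for all $\bdw\in\vspan\{\bdx,\bdv\}$, which by Lemma~\ref{lem:MinRSpanZeroDir} (applied with $\bdz$ in place of $\bdx$) is also sufficient for global optimality over the span thanks to pseudoconvexity on $\coneC$.

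The core computation is therefore: find $\bdz=a\bdx+b\bdv\in\coneC$ with $\Lambda(\bdz;\bdx)=\Lambda(\bdz;\bdv)=0$. Using $\Lambda(\bdz;\bdw)=2s_{\bdz}\bdw^{T}(s_{\bdz}\BfQ\bdz-\bdc)$ and $s_{\bdz}>0$ on $\coneC$, this says $\bdx^{T}(s_{\bdz}\BfQ\bdz-\bdc)=0$ and $\bdv^{T}(s_{\bdz}\BfQ\bdz-\bdc)=0$. Equivalently, writing out $s_{\bdz}=(\bdc^{T}\bdz)/(\bdz^{T}\BfQ\bdz)$ and clearing the positive denominator, one gets the pair of bilinear equations
\[
(\bdc^{T}\bdz)(\bdx^{T}\BfQ\bdz)=(\bdc^{T}\bdx)(\bdz^{T}\BfQ\bdz),\qquad
(\bdc^{T}\bdz)(\bdv^{T}\BfQ\bdz)=(\bdc^{T}\bdv)(\bdz^{T}\BfQ\bdz).
\]
Substituting $\bdz=a\bdx+b\bdv$ and comparing coefficients, the solution (up to scale) turns out to be $a=\Upsilon(\bdv;\bdx)$, $b=\Upsilon(\bdx;\bdv)$; I would verify this by direct substitution, checking that the antisymmetric combination of the two equations collapses, using the definition $\Upsilon(\bdx;\bdv)=(\bdc^{T}\bdv)(\bdx^{T}\BfQ\bdx)-(\bdc^{T}\bdx)(\bdv^{T}\BfQ\bdx)$. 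A short argument is needed that $\bdz_{\bdx,\bdv}=\Upsilon(\bdv;\bdx)\bdx+\Upsilon(\bdx;\bdv)\bdv$ is not the zero vector and lies in $\coneC$: non-vanishing follows because $\BfQ\bdx,\BfQ\bdv$ non-collinear forces $\bdx,\bdv$ linearly independent, so $\bdz_{\bdx,\bdv}=0$ would force $\Upsilon(\bdv;\bdx)=\Upsilon(\bdx;\bdv)=0$, and one checks this contradicts $\bdc^{T}\bdx\neq0$ or $\bdc^{T}\bdv\neq0$ combined with non-collinearity (e.g.\ by computing $\bdc^{T}\bdz_{\bdx,\bdv}$ and $\bdz_{\bdx,\bdv}^{T}\BfQ\bdz_{\bdx,\bdv}$ and a Cauchy--Schwarz-type strict inequality coming from non-collinearity in the $\BfQ$ semi-inner product).

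Having found a stationary point $\bdz_{\bdx,\bdv}\in\coneC$, Lemma~\ref{lem:MinRSpanZeroDir} gives $R(\bdz_{\bdx,\bdv})=\min_{\bdz\in\vspan\{\bdx,\bdv\}}R(\bdz)$. For the reverse inclusion — that \emph{every} minimiser is a positive multiple of $\bdz_{\bdx,\bdv}$ — I would argue that pseudoconvexity on the cone, together with the fact that the minimum value is strictly below $\bdc^{T}\alpb$ and hence attained only inside $\coneC$, forces any minimiser $\bdz^{\star}\in\coneC$ to satisfy $\Lambda(\bdz^{\star};\bdw)=0$ for all $\bdw$ in the span; then the same bilinear system shows $\bdz^{\star}$ is proportional to $\bdz_{\bdx,\bdv}$, and the proportionality constant must be positive since both lie in $\coneC$. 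The main obstacle I anticipate is the uniqueness direction: one must rule out "minimisers at infinity" (i.e.\ the possibility that $\inf$ over the span is only approached along $\pm\bdv$ and not attained), which is exactly where non-collinearity of $\BfQ\bdx$ and $\BfQ\bdv$ is essential — it ensures $R$ is genuinely pseudoconvex and not flat along the relevant half-plane, so the stationary point is a strict interior minimiser. Handling the bookkeeping of signs (which of $\bdv,-\bdv$ enters $\coneC$, and the sign of $\Upsilon(\bdv;\bdx)$) is the fiddly part, but it is routine once the stationarity equations are in hand.
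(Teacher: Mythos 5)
Your route is sound and genuinely different from the paper's. The paper never touches the first-order conditions of $R$ itself: it minimises the convex quadratic $(\beta,\gamma)\mapsto D(\beta\bdx+\gamma\bdv)$, whose stationarity conditions form a \emph{linear} system with positive-definite matrix (determinant $\omega=(\bdx^{T}\BfQ\bdx)(\bdv^{T}\BfQ\bdv)-(\bdv^{T}\BfQ\bdx)^{2}>0$ by strict Cauchy--Schwarz under non-collinearity), solves it to get $\bdz_{\bdx,\bdv}/\omega$, notes $\bdz_{\bdx,\bdv}\neq0$ from $(\bdc^{T}\bdx,\bdc^{T}\bdv)\neq(0,0)$, deduces $\bdz_{\bdx,\bdv}\in\coneC$ from $R(\bdz_{\bdx,\bdv})=D(\bdz_{\bdx,\bdv}/\omega)<D(0)=R(0)$, and transfers back to $R$ purely by rescaling invariance ($R=\min_{s\geqslant0}D(s\,\cdot)$ and the span is a cone). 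You instead work with the bilinear stationarity system $\Upsilon(\bdz;\bdx)=\Upsilon(\bdz;\bdv)=0$ for $R$ on $\coneC$ and certify optimality via Lemma~\ref{lem:MinRSpanZeroDir} (pseudoconvexity), then get uniqueness of the ray from stationarity of any minimiser. Your computations do check out: the system in $\coneC$ reduces, after dividing by $\bdc^{T}\bdz>0$ and $\bdz^{T}\BfQ\bdz>0$ and inverting the Gram matrix, to $\bdz$ being a positive multiple of $\bdz_{\bdx,\bdv}$; and $\bdc^{T}\bdz_{\bdx,\bdv}=(\bdc^{T}\bdx)^{2}(\bdv^{T}\BfQ\bdv)+(\bdc^{T}\bdv)^{2}(\bdx^{T}\BfQ\bdx)-2(\bdc^{T}\bdx)(\bdc^{T}\bdv)(\bdx^{T}\BfQ\bdv)>0$ by the same strict Cauchy--Schwarz, so $\bdz_{\bdx,\bdv}\in\coneC$ as required. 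What the paper's route buys is brevity: convexity of $D$ on the subspace does all the work and no attainment or ``minimiser at infinity'' discussion is needed; what yours buys is that it stays intrinsic to $R$ and exercises the pseudoconvexity machinery that the rest of the section relies on anyway.

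Two side claims in your opening paragraph are wrong, though neither is needed once you order the argument as you later do. First, $R$ is \emph{not} locally constant near points of $\SZ$: approaching $\bdz\in\SZ$ along $\bdz+t\bdv$ with $\bdv\in\coneC$ gives $R(\bdz+t\bdv)=R(\bdv)<\bdc^{T}\alpb$ for all $t>0$, so $R$ jumps at $\SZ$ (this is exactly why $\Lambda(\bdz;\bdv)=-\infty$ there); in particular a continuity-plus-``locally constant'' attainment argument fails at $0\in\vspan\{\bdx,\bdv\}$. Second, not every nonzero element of the span is a positive rescaling of some $\bdx+t\bdv$ or of $\pm\bdv$ (you lose the half-plane with negative $\bdx$-coefficient). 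The fix is simply to drop that paragraph: exhibit the stationary point $\bdz_{\bdx,\bdv}\in\coneC$ first, invoke Lemma~\ref{lem:MinRSpanZeroDir} (choosing $\bdw\in\{\bdx,\bdv\}$ so that $\vspan\{\bdz_{\bdx,\bdv},\bdw\}=\vspan\{\bdx,\bdv\}$, possible since $\bdx,\bdv$ are linearly independent and at least one of $\Upsilon(\bdv;\bdx),\Upsilon(\bdx;\bdv)$ is nonzero), and attainment is automatic; any other minimiser then has $R$-value strictly below $\bdc^{T}\alpb$, hence lies in the open set $\coneC$ where $R$ is differentiable, hence is stationary along $\bdx$ and $\bdv$ (this is plain differentiability at an interior minimum, not pseudoconvexity), hence lies on the ray. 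With that reordering your proof is complete.
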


\begin{proof}
Cancelling the partial derivatives of the quadratic map 
$(\beta,\gamma)\mapsto D(\beta\bdx+\gamma\bdv)$, 
$\beta$ and $\gamma\in\BR$, 
leads to the linear system 
\begin{alignat*}{1}[left = \empheqlbrace]
\beta(\bdx^{T}\BfQ\bdx) + \gamma (\bdv^{T}\BfQ\bdx) & = \bdc^{T}\bdx, \\
\beta(\bdu^{T}\BfQ\bdx) + \gamma (\bdv^{T}\BfQ\bdu) & = \bdc^{T}\bdv.  
\end{alignat*}
By CS, if $\BfQ\bdx$ and $\BfQ\bdv$ are non-collinear, then 
$\omega=(\bdx^{T}\BfQ\bdx)(\bdv^{T}\BfQ\bdv) - (\bdv^{T}\BfQ\bdx)^{2}>0$. 
In this case, a unique solution $(\bar{\beta}, \bar{\gamma})$ exists, with
\[
(\bar{\beta}, \bar{\gamma})=\big(\Upsilon(\bdv;\bdx),  \Upsilon(\bdx;\bdv)\big)/\omega, 
\] 
that is, $\bar{\beta}\bdx+\bar{\gamma}\bdv=\bdz_{\bdx,\bdv}/\omega$. 
As $(\bdc^{T}\bdx,\bdc^{T}\bdv)\neq (0,0)$, we have $\bdz_{\bdx,\bdv}\neq 0$.
Observing that $R(\bdz_{\bdx,\bdv})=D(\bdz_{\bdx,\bdv}/\omega)<D(0)=R(0)$, 
we get $\bdz_{\bdx,\bdv}\in\coneC$, 
and by invariance under rescaling, the minimum of $R$ over 
$\vspan\{\bdx,\bdv\}$ is reached over the unpointed ray spanned by $\bdz_{\bdx,\bdv}$. 
\end{proof}

\begin{proof}[Proof of Theorem~\ref{thm:RmapOptSS}]
If $\bdx\in\coneC$ and $\bdv\in\BR^{N}$ are such that 
$\BfQ\bdx$ and $\BfQ\bdv$ are non-collinear, 
then, by CS, we have $(\bdx^{T}\BfQ\bdx)(\bdv^{T}\BfQ\bdv) - (\bdv^{T}\BfQ\bdx)^{2}>0$, 
and so ${\bdz_{t}^{T}\BfQ\bdz_{t}^{}>0}$, $t\in\BR$.  
Also, as $\bdc^{T}\bdx>0$, 
Lemma~\ref{lem:MinRSpan} holds (with $\bdz_{\bdx,\bdv}\in\coneC$). 

Considering the intersection between the affine lines
$\{s\bdz_{\bdx,\bdv}|s\in\BR\}$ and $\{\bdz_{t}|t\in\BR\}$, 
we obtain the linear system of equations
\begin{alignat*}{1}[left = \empheqlbrace]
1 - s\Upsilon(\bdv;\bdx) & = 0, \\
t - s\Upsilon(\bdx;\bdv) & = 0.  
\end{alignat*}
For $\Upsilon(\bdv;\bdx)\neq0$, a unique solution $(\bar{s},{\bar{t}})$ exists, 
with
\[
\bar{s}=1/\Upsilon(\bdv;\bdx) 
\quad\text{and}\quad
\bar{t}=\Upsilon(\bdx;\bdv)/\Upsilon(\bdv;\bdx), 
\]
and for $\Upsilon(\bdv;\bdx)=0$, there are no solutions. 

For $\Upsilon(\bdv;\bdx)>0$, we have $\bar{s}>0$; 
in this case, the unpointed ray $\{s\bdz_{\bdx,\bdv}|s>0\}$ 
intersects with the line $\{\bdz_{t}|t\in\BR\}$
(see Figure~\ref{fig:Illustr_Thm32}),  
and Lemma~\ref{lem:MinRSpan} gives  
$R(\bdz_{\tau})=\min\limits_{\bdz\in\vspan\{\bdx,\bdv\}}R(\bdz)$,  
with $\tau=\bar{t}$, proving assertion~\ref{item:OSSDesc}.  

If $\Upsilon(\bdv;\bdx)\leqslant0$,  
then the unpointed ray $\{s\bdz_{\bdx,\bdv}|s>0\}$ 
and the line $\{\bdz_{t}|t\in\BR\}$ do not intersect  
(see Figure~\ref{fig:Illustr_Thm32}),  
and from Lemmas~\ref{lem:MinRSpanZeroDir} and \ref{lem:MinRSpan}, 
we necessarily have $\Lambda(\bdx;\bdv)\neq0$
(we would otherwise have $\bdx=\bdz_{0}\in\{s\bdz_{\bdx,\bdv}|s>0\}$).  
Set $f(t)=R(\bdz_{t})$, $t\in\BR$, and introduce
\begin{equation}\label{eq:DefgFunc}
g(t)=
2\big(t(\bdc^{T}\bdv)+(\bdc^{T}\bdx)\big)
\big(t\Upsilon(\bdv;\bdx)-\Upsilon(\bdx;\bdv)\big) 
/(\bdz_{t}^{T}\BfQ\bdz_{t}^{})^{2};   
\end{equation}
we have $f'(t)=g(t)$ if $\bdz_{t}\in\coneC$, and $f'(t)=0$ otherwise;  
observe that if $\bdc^{T}\bdv=0$, then $\Upsilon(\bdv;\bdx)>0$;  
we thus have $\bdc^{T}\bdv\neq0$, and for $t_{1}=-(\bdc^{T}\bdx)/(\bdc^{T}\bdv)$, 
we obtain $\bdc^{T}\bdz_{t_{1}}=0$ and $g(t_{1})=0$.     
The function $f$ is therefore continuously differentiable on $\BR$, 
and $f$ is maximum at $t_{1}$;  
remark that the set $\{t\in\BR|\bdz_{t}\in\coneC\}$ is an interval of the form 
$(-\infty,t_{1})$ or $(t_{1},+\infty)$. 
We then observe that we necessarily have $g(t)\neq 0$ for all $t\in\BR$ such that $\bdz_{t}\in\coneC$.  
Indeed, if there were $t_{2}\in\BR$ 
such that $\bdz_{t_{2}}\in\coneC$ and $g(t_{2})=0$, 
then $t_{2}$ would be unique from \eqref{eq:DefgFunc}, 
and such that $\Lambda(\bdz_{t_{2}};\bdv)=0$;  
consequently, Lemmas~\ref{lem:MinRSpanZeroDir} and \ref{lem:MinRSpan} 
would imply $\bdz_{t_{2}}\in\{s\bdz_{\bdx,\bdv}|s>0\}$, 
leading to a contradiction. 
The function $f$ is therefore monotonic.  
By invariance under rescaling, 
we next observe that 
\[
\inf_{t\in\BR}R(\bdz_{t})
=\inf_{\bdz\in\coni\{\bdz_{t}|t\in\BR\}}R(\bdz). 
\]
The extreme rays of the closure in $\BR^{N}$ of 
the convex cone $\coni\{\bdz_{t}|t\in\BR\}$
are spanned by the vectors $-\bdv$ and $\bdv$, 
entailing
$\inf_{t\in\BR}R(\bdz_{t})=\min\{R(-\bdv),R(\bdv)\}$; 
more precisely, the infimum is $R(\bdv)$ if $\bdv\in\coneC$, and $R(-\bdv)$ otherwise.  
In particular, 
if $\Upsilon(\bdv;\bdx)=0$, 
then $\bdz_{\bdx,\bdv}=\Upsilon(\bdx;\bdv)\bdv$, 
concluding the proof of assertion~\ref{item:NonStopDesc}. 

If $\BfQ\bdv=\beta\BfQ\bdx$, $\beta\in\BR$, then 
$R(\gamma\bdx+\delta\bdv)=R\big((\gamma+\delta\beta)\bdx\big)$, 
$\gamma$ and $\delta\in\BR$, 
concluding the proof.  
\end{proof}

\begin{figure}[htbp]
\centering
\includegraphics[width=0.75\linewidth]{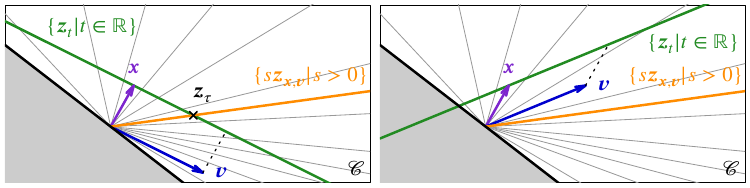}
\caption{Schematic representation of the situations discussed 
in Theorem~\ref{thm:RmapOptSS}.  
The left plot corresponds to the case $\Upsilon(\bdv;\bdx)>0$, 
and the right plot to $\Upsilon(\bdv;\bdx)\leqslant 0$. 
In each plot, the grey region indicates the set ${\{\bdx\in\BR^{N}|\bdc^{T}\bdx\leqslant 0\}}$, 
and the  grey lines are level sets of the map $R$ on $\vspan\{\bdx,\bdv\}$; 
the vector $\bdz_{\bdx,\bdv}$ is characterised in Lemma~\ref{lem:MinRSpan}. }
\label{fig:Illustr_Thm32}
\end{figure}

Corollary~\ref{cor:AboutRxLessThanRu} provides a sufficient condition 
for a line search from $\bdx$ along $\bdv$ to occur in the framework of 
Theorem~\ref{thm:RmapOptSS}-\ref{item:OSSDesc}. 

\begin{corollary}\label{cor:AboutRxLessThanRu}
If $\bdx\in\coneC$ and $\bdv\in\BR^{N}$  
are such that 
$R(\bdx)\leqslant\min\{R(-\bdv),R(\bdv)\}$ 
and $\Lambda(\bdx;\bdv)\neq0$, 
then $\Upsilon(\bdv;\bdx)>0$. 
\end{corollary}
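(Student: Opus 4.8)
The plan is to argue by contraposition via Theorem~\ref{thm:RmapOptSS}. Suppose $\bdx\in\coneC$ and $\bdv\in\BR^{N}$ satisfy $\Lambda(\bdx;\bdv)\neq0$; I will show that if $\Upsilon(\bdv;\bdx)\leqslant0$, then $R(\bdx)>\min\{R(-\bdv),R(\bdv)\}$, which is exactly the contrapositive of the claimed implication. First I would dispose of the collinear case: if $\BfQ\bdx$ and $\BfQ\bdv$ are collinear, then by the last part of Theorem~\ref{thm:RmapOptSS} the map $R$ is piecewise-constant on $\vspan\{\bdx,\bdv\}$ taking only the values $R(0)$ and $R(\bdx)$, and moreover $R(\gamma\bdx+\delta\bdv)=R((\gamma+\delta\beta)\bdx)$; in particular $R(\bdx+t\bdv)$ is constant in $t$ near $0$, forcing $\Lambda(\bdx;\bdv)=0$, contrary to assumption. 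So under our hypotheses $\BfQ\bdx$ and $\BfQ\bdv$ are non-collinear and Theorem~\ref{thm:RmapOptSS} applies in its main form.

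Next, assuming $\Upsilon(\bdv;\bdx)\leqslant0$, I invoke assertion~\ref{item:NonStopDesc} of Theorem~\ref{thm:RmapOptSS}: the function $f(t)=R(\bdx+t\bdv)$ is monotonic on $\BR$, with $\inf_{t\in\BR}f(t)=\min\{R(-\bdv),R(\bdv)\}$. Since $f$ is monotonic and (from the proof of Theorem~\ref{thm:RmapOptSS}) continuously differentiable with $f'=g$ on the interval $\{t\mid\bdz_t\in\coneC\}$, and since $\Lambda(\bdx;\bdv)\neq0$ gives $f'(0)=\Lambda(\bdx;\bdv)\neq0$ (note $\bdx=\bdz_0\in\coneC$), the value $f(0)=R(\bdx)$ cannot be the infimum of a monotonic function that is strictly varying at $0$. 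More precisely, monotonicity of $f$ together with $f'(0)\neq0$ forces $f(0)$ to lie strictly above $\inf_{t\in\BR}f(t)$: if $f$ is nonincreasing then $f(0)>\lim_{t\to+\infty}f(t)=\inf f$ because $f$ strictly decreases past $t=0$, and symmetrically if $f$ is nondecreasing. Hence $R(\bdx)=f(0)>\inf_{t}f(t)=\min\{R(-\bdv),R(\bdv)\}$, completing the contrapositive.

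The only delicate point is the strictness in the last step: one must be sure that a monotonic $f$ with $f'(0)\neq0$ has $f(0)$ strictly separated from its infimum, i.e. that the monotonicity is not ``flat'' on one side of $0$ in a way that would let $f(0)$ equal the infimum. This is handled by the structure established in the proof of Theorem~\ref{thm:RmapOptSS}: on the coneC-interval containing $0$ one has $f'=g$ with $g$ nonvanishing there, so $f$ is \emph{strictly} monotonic on that interval; thus moving from $0$ in the descent direction strictly decreases $f$, and the infimum (attained in the limit along an extreme ray) is strictly below $f(0)$. I expect this verification of strictness to be the main obstacle, but it follows directly from facts already assembled in the proof of Theorem~\ref{thm:RmapOptSS}, so the argument is short.
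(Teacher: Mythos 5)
Your proof is correct and follows essentially the same route as the paper's: deduce non-collinearity of $\BfQ\bdx$ and $\BfQ\bdv$ from $\Lambda(\bdx;\bdv)\neq0$, then use Theorem~\ref{thm:RmapOptSS} together with the structure established in its proof to exclude $\Upsilon(\bdv;\bdx)\leqslant0$. The only difference is the finishing step: the paper argues positively that quasiconvexity and the limits $R(\pm\bdv)$ force $f$ to attain its infimum, which places the line search in the setting of Theorem~\ref{thm:RmapOptSS}-\ref{item:OSSDesc}, whereas you argue contrapositively that in the setting of Theorem~\ref{thm:RmapOptSS}-\ref{item:NonStopDesc} the nonvanishing of $g$ on the $\coneC$-interval containing $t=0$ (equivalently $f'(0)=\Lambda(\bdx;\bdv)\neq0$) makes $f$ strictly decrease past $0$ in the descent direction, giving $R(\bdx)>\min\{R(-\bdv),R(\bdv)\}$ — an equivalent closing argument.
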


\begin{proof} 
As $\Lambda(\bdx;\bdv)\neq0$, 
$\BfQ\bdx$ and $\BfQ\bdv$ are non-collinear, 
and the function $f:t\mapsto R(\bdz_{t})$ is non-constant 
and continuously differentiable on $\BR$; 
also, if $f$ admits a minimum, then its argument is unique 
(see the proof of Theorem~\ref{thm:RmapOptSS}).  
As $R(\bdx)\leqslant\min\{R(-\bdv),R(\bdv)\}$, 
observing that 
$\lim_{t\to\pm\infty}f(t)=R(\pm\bdv)$, by quasiconvexity,  
$f$ necessarily reaches its infimum, 
and the corresponding line search thus occurs 
in the framework of Theorem~\ref{thm:RmapOptSS}-\ref{item:OSSDesc}. 
\end{proof}

Lemma~\ref{lem:ImprovVal} provides an expression for the improvement 
yielded by an exact line search from $\bdx$ along $\bdv$. 
Following Theorem~\ref{thm:RmapOptSS}, we more generally introduce 
\[
\CI_{R}(\bdx;\bdv)=R(\bdx)-\min\limits_{\bdz\in\vspan\{\bdx,\bdv\}}R(\bdz), 
\text{$\bdx$ and $\bdv\in\BR^{N}$. }
\] 

\begin{lemma}\label{lem:ImprovVal}
Consider $\bdx\in\coneC$ and $\bdv\in\BR^{N}$.     
If $\BfQ\bdx$ and $\BfQ\bdv$ are non-collinear, 
we have 
\begin{align}\label{eq:OSSImprovVal}
\CI_{R}(\bdx;\bdv)
& =\big(\bdv^{T}(s_{\bdx}\BfQ\bdx-\bdc)\big)^{2}\big/
\big((\bdv^{T}\BfQ\bdv)-(\bdv^{T}\BfQ\bdx)^{2}/(\bdx^{T}\BfQ\bdx)\big);    
\end{align} 
otherwise, we have $\CI_{R}(\bdx;\bdv)=0$. 
\end{lemma}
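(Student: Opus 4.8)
The plan is to reduce everything to the situation already analysed in the proof of Theorem~\ref{thm:RmapOptSS} and then perform one clean substitution. First I would dispose of the collinear case: if $\BfQ\bdx$ and $\BfQ\bdv$ are collinear, then by the last paragraph of Theorem~\ref{thm:RmapOptSS} the map $R$ is piecewise-constant on $\vspan\{\bdx,\bdv\}$ with minimum value $R(\bdx)$ attained at $\bdx$ itself (since $\bdx\in\coneC$ gives $R(\bdx)<R(0)$), so $\CI_R(\bdx;\bdv)=0$. For the non-collinear case, recall from Lemma~\ref{lem:MinRSpan} that the minimiser over $\vspan\{\bdx,\bdv\}$ is (up to positive rescaling) $\bdz_{\bdx,\bdv}/\omega=\bar\beta\bdx+\bar\gamma\bdv$ with $(\bar\beta,\bar\gamma)=(\Upsilon(\bdv;\bdx),\Upsilon(\bdx;\bdv))/\omega$ and $\omega=(\bdx^T\BfQ\bdx)(\bdv^T\BfQ\bdv)-(\bdv^T\BfQ\bdx)^2>0$, and that this point is the unconstrained minimiser of the plane quadratic $(\beta,\gamma)\mapsto D(\beta\bdx+\gamma\bdv)$. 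So $\min_{\bdz\in\vspan\{\bdx,\bdv\}}R(\bdz)=D(\bar\beta\bdx+\bar\gamma\bdv)$ and $\CI_R(\bdx;\bdv)=R(\bdx)-D(\bar\beta\bdx+\bar\gamma\bdv)$.

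Next I would exploit the one-dimensional structure. Since $R(\bdx)=D(s_{\bdx}\bdx)$ and $s_{\bdx}\bdx$ is the minimiser of $D$ along the ray through $\bdx$, I can write $D(\beta\bdx+\gamma\bdv)=\|\beta\bdx+\gamma\bdv-\alpb\|_{\BfQ}^2$ and expand around the point $s_{\bdx}\bdx$. Concretely, set $\bdr=s_{\bdx}\bdx-\alpb$ (so that $\BfQ\bdr=s_{\bdx}\BfQ\bdx-\bdc$ and $\bdx^T\BfQ\bdr=0$ by optimality of $s_{\bdx}$, i.e. $\Lambda(\bdx;\bdx)=0$). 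Then for any $\gamma$, the best $\beta$ keeps the $\bdx$-component optimal, and a short computation of the reduced quadratic in $\gamma$ shows
\[
R(\bdx)-\min_{\gamma}D\big(\tfrac{\bdc^T\bdx}{\bdx^T\BfQ\bdx}\bdx+\gamma\bdv\big)
=\frac{(\bdv^T\BfQ\bdr)^2}{(\bdv^T\BfQ\bdv)-(\bdv^T\BfQ\bdx)^2/(\bdx^T\BfQ\bdx)},
\]
because the coefficient of $\gamma^2$ in the Schur-complemented quadratic is exactly the ``$\BfQ$-norm of $\bdv$ modulo $\bdx$'' appearing in the denominator, while the linear coefficient is $2\bdv^T\BfQ\bdr=2\bdv^T(s_{\bdx}\BfQ\bdx-\bdc)$. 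Substituting $\BfQ\bdr=s_{\bdx}\BfQ\bdx-\bdc$ gives precisely \eqref{eq:OSSImprovVal}.

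Alternatively — and this is probably the shortest route — I would get the numerator and denominator directly from the quantities already computed in the proof of Theorem~\ref{thm:RmapOptSS}. One can combine $R(\bdz_\tau)=D(\bar\beta\bdx+\bar\gamma\bdv)$ with the explicit $(\bar\beta,\bar\gamma)$ and the expression $\nabla R(\bdx)=2s_{\bdx}(s_{\bdx}\BfQ\bdx-\bdc)$ for the gradient: since along the plane $R$ agrees with the quadratic $D$ near its minimiser, $\CI_R(\bdx;\bdv)$ equals one half the inner product of $\nabla D$ at $s_{\bdx}\bdx$ with the Newton step, which after dividing out the rescaling collapses to $\big(\bdv^T(s_{\bdx}\BfQ\bdx-\bdc)\big)^2$ over the Schur complement. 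The main obstacle is purely bookkeeping: one must be careful that $R$ restricted to $\vspan\{\bdx,\bdv\}$ is \emph{not} globally the quadratic $D$ (it is only its lower envelope under rescaling, and is constant outside $\coneC$), so the identification $\min_{\vspan\{\bdx,\bdv\}}R=\min_{\beta,\gamma}D(\beta\bdx+\gamma\bdv)$ must be justified via Lemma~\ref{lem:MinRSpan} (which guarantees the planar minimiser lies in $\coneC$) before any quadratic expansion is performed. Once that is in place, the computation is a routine Schur-complement identity, and simplifying $s_{\bdx}=(\bdc^T\bdx)/(\bdx^T\BfQ\bdx)$ yields the stated closed form.
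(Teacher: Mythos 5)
Your proposal is correct, and it takes a noticeably different route from the paper's. The paper proves \eqref{eq:OSSImprovVal} by splitting on the sign of $\Upsilon(\bdv;\bdx)$: when $\Upsilon(\bdv;\bdx)>0$ it invokes Theorem~\ref{thm:RmapOptSS}-\ref{item:OSSDesc} and evaluates $R(\bdx)-R(\bdz_{\tau})$ (left as ``a direct computation''), and when $\Upsilon(\bdv;\bdx)\leqslant 0$ it replaces $\bdv$ by $\bdv+\beta\bdx$ with $\Upsilon(\bdv+\beta\bdx;\bdx)>0$, using that both sides of \eqref{eq:OSSImprovVal} are unchanged under this substitution (the left side because $\vspan\{\bdx,\bdv+\beta\bdx\}=\vspan\{\bdx,\bdv\}$, the right side because $\bdx^{T}(s_{\bdx}\BfQ\bdx-\bdc)=0$ and the denominator is a Schur complement). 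You bypass the case split entirely: Lemma~\ref{lem:MinRSpan} (applicable since $\bdc^{T}\bdx>0$) identifies $\min_{\bdz\in\vspan\{\bdx,\bdv\}}R(\bdz)$ with the unconstrained planar minimum of $D$, and then partial minimisation over the $\bdx$-component --- where your phrase ``the best $\beta$ keeps the $\bdx$-component optimal'' is justified by the envelope theorem together with $\bdx^{T}(s_{\bdx}\BfQ\bdx-\bdc)=0$ --- gives the drop $\big(\bdv^{T}(s_{\bdx}\BfQ\bdx-\bdc)\big)^{2}$ divided by the Schur complement $\bdv^{T}\BfQ\bdv-(\bdv^{T}\BfQ\bdx)^{2}/(\bdx^{T}\BfQ\bdx)$, which is strictly positive by Cauchy--Schwarz under non-collinearity, so no division by zero occurs. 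Your collinear case (via the last assertion of Theorem~\ref{thm:RmapOptSS}) is equivalent to the paper's, which uses $\Lambda(\bdx;\bdv)=0$ and Lemma~\ref{lem:MinRSpanZeroDir}. The trade-off: the paper's argument reuses the geometry of Theorem~\ref{thm:RmapOptSS} and its step size $\tau$ but needs the shift trick and leaves the algebra implicit, whereas yours is uniform in $\bdv$ and makes the computation explicit, and you correctly flag the one point that must be justified before expanding the quadratic, namely that $R$ on the span is only the rescaling envelope of $D$, so the identification of the two minima rests on Lemma~\ref{lem:MinRSpan}.
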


\begin{proof}
If $\BfQ\bdx$ and $\BfQ\bdv$ are collinear, 
we have $\Lambda(\bdx;\bdv)=0$  
and the result follows from Lemma~\ref{lem:MinRSpanZeroDir}. 
Assume that $\BfQ\bdx$ and $\BfQ\bdv$ are non-collinear. 
From Theorem~\ref{thm:RmapOptSS},  
if ${\Upsilon(\bdv;\bdx)>0}$, then 
$\CI_{R}(\bdx;\bdv)=R(\bdx)-R(\bdz_{\tau})$, 
and a direct computation gives \eqref{eq:OSSImprovVal}. 
We conclude by observing that 
if $\bdv\in\BR^{N}$ is such that $\Upsilon(\bdv;\bdx)\leqslant 0$, 
as $\Upsilon(\bdv+\beta\bdx;\bdx)=\Upsilon(\bdv;\bdx)-\beta\Upsilon(\bdx;\bdv)$, 
there exists $\beta\in\BR$ such that $\Upsilon(\bdv+\beta\bdx;\bdx)>0$;  
as $\vspan\{\bdx,\bdv+\beta\bdx\}=\vspan\{\bdx,\bdv\}$, 
the result follows. 
\end{proof}

\begin{remark}[Exact line search for the minimisation of $D$]\label{rem:ELSForMiniD}
For $\bdx$ and $\bdv\in\BR^{N}$, $\bdv\not\in\SZ$, 
the map $t\mapsto D(\bdx+t\bdv)$, $t\in\BR$, is minimum at 
$t=\rho$, with 
\begin{equation}\label{eq:OSS4D}
\rho=-\big(\bdv^{T}(\BfQ\bdx-\bdc)\big)\big/(\bdv^{T}\BfQ\bdv). 
\end{equation}
Setting 
$\CI_{D}(\bdx;\bdv)
=D(\bdx)-\min\limits_{t\in\BR}D(\bdx+t\bdv)$, 
$\bdx$ and $\bdv\in\BR^{N}$, 
we thus have 
\begin{align*}
\CI_{D}(\bdx;\bdv)
=\bigg\{
\begin{array}{l}
\text{$\big(\bdv^{T}(\BfQ\bdx-\bdc)\big)^{2}\big/(\bdv^{T}\BfQ\bdv)$ if $\bdv\not\in\SZ$,  }\\
\text{$0$ otherwise}. 
\end{array}
\end{align*}
For $\bdx\in\coneC$ and $\bdv\in\BR^{N}$ 
such that $\BfQ\bdx$ and $\BfQ\bdv$ are non-collinear, 
we obtain 
\[
\CI_{R}(\bdx;\bdv)
=\CI_{D}(s_{\bdx}\bdx;\bdv)\correc(\bdx;\bdv), 
\text{ with }
\correc(\bdx;\bdv)=
\bigg(1-\frac{(\bdv^{T}\BfQ\bdx)^{2}}{(\bdv^{T}\BfQ\bdv)(\bdx^{T}\BfQ\bdx)} \bigg)^{-1}.   
\]
The improvement $\CI_{R}(\bdx;\bdv)$ 
is thus the product between $\CI_{D}(s_{\bdx}\bdx;\bdv)$,  
that is, the improvement yielded by an exact line search for the minimisation of $D$ from $s_{\bdx}\bdx$ along $\bdv$, 
and $\correc(\bdx;\bdv)$, the latter term accounting for the optimal rescaling of the iterate. 
Observe that $\correc(\bdx;\bdv)\geqslant 1$,
and that this term is of the form $1/\sin^{2}(\theta)$, 
with $\theta$ the angle formed by the potentials $\BfQ\bdx$ and $\BfQ\bdv$ in the RKHS $\CH$ 
(so, the more aligned the two potentials are, the larger the value of $\correc$). 
The impact of $\correc$ on the convergence of coordinate-descent-type
strategies for the minimisation of $R$ is discussed in Section~\ref{sec:CorrecAcc}. 
We may also observe that 
$D(\bdx)-R(\bdx)=\CI_{D}(\bdx;\bdx)$, 
$\bdx\in\coneC$. 
\fin
\end{remark}

\begin{remark}\label{rem:DOnElips}
Through the rescaling of $\bdx$ into $s_{\bdx}\bdx$, 
the minimisation of $R$ over $\coneC$ relates to 
the minimisation of $D$ over the set
\begin{align*}
\{\bdx\in\BR^{N}|s_{\bdx}=1\}
&=\coneC\cap\{\bdx\in\BR^{N}|~\bdx^{T}\BfQ\bdx=\bdc^{T}\bdx\}\\
&=\big\{\bdx\in\BR^{N}\big|~\|2\BfQ\bdx-\bdc\|_{\BfQ^{\dag}}^{2}=\|\bdc\|_{\BfQ^{\dag}}^{2}\big\}\backslash\SZ, 
\end{align*}
corresponding to a truncated ellipsoid in $\BR^{N}$. 
\fin
\end{remark}

\section{Coordinate descent with gradient-based rules}
\label{sec:CoordDescGradRule} 
We now compare the unconstrained minimisation of $D$ and $R$ 
using exact CD (that is, iterations consist of exact line searches along
directions in $\{\bde_{i}\}_{i\in[N]}$). 
For the coordinate selection, 
we consider gradient-based rules (other rules, such as cyclic or randomised, could be considered). 
For simplicity and without loss of generality, we assume that 
${\BfQ_{i,i}>0}$, $i\in[N]$, so that $\{\bde_{i}\}_{i\in[N]}\cap\SZ=\emptyset$. 
We set 
\[
\iota_{\BfQ}=\frac{\lambda_{\min}(\BfQ)}{N\max_{i\in[N]}\BfQ_{i,i}}\in(0,1],   
\]
with $\lambda_{\min}(\BfQ)>0$ the smallest non-zero eigenvalue of $\BfQ$ 
(see Remark~\ref{rem:OtherIotaQ}). 

\begin{remark}\label{rem:AboutCoordDesc}
From a numerical standpoint, 
a benefit from considering CD rather than 
(conjugate) gradient descent
for the minimisation of $D$ or $R$ is the affordability of the iterations. 
Setting $\bdx_{\smwhtsquare}=\bdx+t\bde_{i}$, $t\in\BR$, 
we indeed have 
\[
\bdc^{T}\bdx_{\smwhtsquare}=\bdc^{T}\bdx+ t c_{i},
\BfQ\bdx_{\smwhtsquare}=\BfQ\bdx + t\BfQ_{\bullet,i}
\text{ and }\bdx_{\smwhtsquare}^{T}\BfQ\bdx_{\smwhtsquare}=
\bdx^{T}\BfQ\bdx + t^{2}\BfQ_{i,i} + 2 t [\BfQ\bdx]_{i};   
\]
the update of the relevant quantities for implementing CD therefore 
has an $\CO(N)$ worst-case time complexity, 
against $\CO(N^{2})$ for gradient descent. 
\fin
\end{remark}

\subsection{Preliminaries: minimisation of $D$}
\label{sec:PrelimMiniD} 
Considering the unconstrained minimisation of $D$ via exact CD, 
following Remark~\ref{rem:ELSForMiniD}, 
at $\bdx\in\BR^{N}$, 
a natural rule for the selection of a coordinate is 
\begin{equation}\label{eq:BIRuleD}
i_{D,\BI,\bdx}
\in\arg\max_{i\in[N]}\CI_{D}(\bdx;\bde_{i}),  
\end{equation}
that is, we consider the coordinate leading to 
the \emph{best improvement} (BI) of $D$. As 
\[
\CI_{D}(\bdx;\bde_{i})
=\frac{[\BfQ\bdx-\bdc]_{i}^{2}}{\BfQ_{i,i}}
=\mip[\Big]{\frac{\BfQ\bde_{i}}{\|\BfQ\bde_{i}\|_{\BfQ^{\dag}}},\BfQ\bdx-\bdc}_{\BfQ^{\dag}}^{2},  
\]
the \emph{BI rule} \eqref{eq:BIRuleD} is equivalent to
the \emph{Gauss-Southwell-Lipschitz} (GSL, see \cite{nutini2015coordinate}) rule; 
notably, the \emph{BI coordinate} for $D$ at $\bdx$ also corresponds to
the \emph{coordinate potential} $\BfQ\bde_{i}$, $i\in[N]$, 
that aligns the most with $\nabla D(\bdx)=2(\BfQ\bdx-\bdc)$ in the RKHS $\CH$ 
(we shall in this case use the terminology \emph{$\CH$ coordinate}, 
see Section~\ref{sec:MiniR}). 
As a technical remark, in \eqref{eq:BIRuleD}, in case of non-unicity,  
a coordinate is simply picked at random among the arguments of the maximum 
(a similar remark holds for all the considered coordinate-selection rules). 

The following Lemma~\ref{lem:CoordImprovBoundD} 
is instrumental in proving the convergence of 
exact CD strategies with gradient-based rules 
for the minimisation of $D$ or $R$. 

\begin{lemma}\label{lem:CoordImprovBoundD}
For all $\bdx\in\BR^{N}$, we have 
$\CI_{D}\big(\bdx;\bde_{i_{D,\BI,\bdx}}\big)\geqslant \iota_{\BfQ}D(\bdx)$. 
\end{lemma}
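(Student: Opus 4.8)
The plan is to compare the best-improvement step against the improvement obtained along a suitably weighted combination of coordinates, and to invoke the known inequality $D(\bdx)=\|\BfQ\bdx-\bdc\|_{\BfQ^\dag}^2=(\BfQ\bdx-\bdc)^T\BfQ^\dag(\BfQ\bdx-\bdc)$. First I would write $\bdg=\BfQ\bdx-\bdc=\tfrac12\nabla D(\bdx)\in\CH$, so that $D(\bdx)=\bdg^T\BfQ^\dag\bdg$ and $\CI_D(\bdx;\bde_i)=[\bdg]_i^2/\BfQ_{i,i}$. Since $i_{D,\BI,\bdx}$ maximises $\CI_D(\bdx;\bde_i)$, we have, writing $M=\max_{i}\BfQ_{i,i}$,
\[
\CI_D\big(\bdx;\bde_{i_{D,\BI,\bdx}}\big)
=\max_{i\in[N]}\frac{[\bdg]_i^2}{\BfQ_{i,i}}
\geqslant\frac{1}{M}\max_{i\in[N]}[\bdg]_i^2
\geqslant\frac{1}{NM}\sum_{i\in[N]}[\bdg]_i^2
=\frac{\|\bdg\|_2^2}{NM}.
\]
So it remains to bound $\|\bdg\|_2^2$ from below by $\lambda_{\min}(\BfQ)\,D(\bdx)=\lambda_{\min}(\BfQ)\,\bdg^T\BfQ^\dag\bdg$.

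The key step is the spectral estimate $\bdg^T\BfQ^\dag\bdg\leqslant\lambda_{\min}(\BfQ)^{-1}\|\bdg\|_2^2$ for $\bdg\in\CH=\vspan\{\BfQ\}$. I would justify this by diagonalising $\BfQ$: write $\BfQ=\sum_{k}\lambda_k\bdu_k\bdu_k^T$ with $\lambda_k>0$ the nonzero eigenvalues and $\{\bdu_k\}$ orthonormal, so that on $\CH$ one has $\BfQ^\dag=\sum_k\lambda_k^{-1}\bdu_k\bdu_k^T$. Since $\bdg\in\CH$, expanding $\bdg=\sum_k(\bdu_k^T\bdg)\bdu_k$ gives $\bdg^T\BfQ^\dag\bdg=\sum_k\lambda_k^{-1}(\bdu_k^T\bdg)^2\leqslant\lambda_{\min}(\BfQ)^{-1}\sum_k(\bdu_k^T\bdg)^2=\lambda_{\min}(\BfQ)^{-1}\|\bdg\|_2^2$, where the last equality uses $\bdg\in\CH$ so that $\bdg$ has no component in the kernel of $\BfQ$. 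Combining this with the earlier display yields
\[
\CI_D\big(\bdx;\bde_{i_{D,\BI,\bdx}}\big)
\geqslant\frac{\|\bdg\|_2^2}{NM}
\geqslant\frac{\lambda_{\min}(\BfQ)}{NM}\,\bdg^T\BfQ^\dag\bdg
=\iota_{\BfQ}\,D(\bdx),
\]
which is the claim.

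I do not anticipate a serious obstacle here; the only point requiring a little care is the membership $\bdg=\BfQ\bdx-\bdc\in\CH$, which holds because $\BfQ\bdx\in\vspan\{\BfQ\}$ by construction and $\bdc\in\vspan\{\BfQ\}$ by the standing assumption of Section~\ref{sec:frameAndNot}; this is what makes the pseudoinverse behave like a genuine inverse on $\bdg$ and licenses dropping the kernel component in $\|\bdg\|_2^2=\|P_{\CH}\bdg\|_2^2$. A secondary check is that $\lambda_{\min}(\BfQ)>0$ and $M=\max_i\BfQ_{i,i}>0$ are both well-defined and positive, which is guaranteed by $\BfQ\neq0$ and the assumption $\BfQ_{i,i}>0$ made at the start of Section~\ref{sec:CoordDescGradRule}; hence $\iota_{\BfQ}\in(0,1]$ is meaningful (the upper bound $\iota_{\BfQ}\leqslant1$ following from $\lambda_{\min}(\BfQ)\leqslant\trace(\BfQ)/\mathrm{rank}(\BfQ)\leqslant NM$, though that bound is not needed for the lemma itself).
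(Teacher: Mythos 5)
Your proof is correct and follows essentially the same route as the paper: lower-bounding the best coordinate improvement by the averaged squared components of $\BfQ\bdx-\bdc$ scaled by $1/(N\max_i\BfQ_{i,i})$, then passing from the $\ell^2$ norm to the $\BfQ^{\dag}$ norm via $\lambda_{\min}(\BfQ)=1/\lambda_{\max}(\BfQ^{\dag})$ (you make the spectral step explicit by diagonalisation, while the paper states it as $\|\bdh\|_{\BfQ^{\dag}}^{2}\leqslant\lambda_{\max}(\BfQ^{\dag})\|\bdh\|_{\ell^{2}}^{2}$, which is the same fact).
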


\begin{proof}
We have
\begin{align}
\CI_{D}\big(\bdx;\bde_{i_{D,\BI,\bdx}}\big)
&=\mip[\Big]{\frac{\BfQ\bde_{i_{D,\BI,\bdx}}}{\|\BfQ\bde_{i_{D,\BI,\bdx}}\|_{\BfQ^{\dag}}},\BfQ\bdx-\bdc}_{\BfQ^{\dag}}^{2}
\geqslant
\frac{1}{N}\sum_{i\in[N]}
\mip[\Big]{\frac{\BfQ\bde_{i}}{\|\BfQ\bde_{i}\|_{\BfQ^{\dag}}},\BfQ\bdx-\bdc}_{\BfQ^{\dag}}^{2}
\label{eq:NinIotaQ}\\ 
&=\frac{1}{N}\sum_{i\in[N]}\frac{1}{\BfQ_{i,i}}[\BfQ\bdx-\bdc]_{i}^{2}  
\geqslant \frac{1}{N\max_{i\in[N]}\BfQ_{i,i}}\|\BfQ\bdx-\bdc\|_{\ell^{2}}^{2}\nonumber\\
&
\geqslant\frac{1}{N\lambda_{\max}(\BfQ^{\dag})\max_{i\in[N]}\BfQ_{i,i}}\|\BfQ\bdx-\bdc\|_{\BfQ^{\dag}}^{2},
\nonumber
\end{align}
the last inequality following from
$\|\bdh\|^{2}_{\BfQ^{\dag}}
=\bdh^{T}\BfQ^{\dag}\bdh
\leqslant\lambda_{\max}(\BfQ^{\dag})\|\bdh\|_{\ell^{2}}^{2}$,  $\bdh\in\CH$. 
We conclude by observing that 
$\lambda_{\min}(\BfQ)=1/\lambda_{\max}(\BfQ^{\dag})$. 
\end{proof}

\begin{remark}\label{rem:OtherIotaQ}
In Lemma~\ref{lem:CoordImprovBoundD}
and the upcoming developments, the constant $\iota_{\BfQ}$ 
may actually be replaced by 
\[
\tilde{\iota}_{\BfQ}=1/\big(N\lambda_{\max}(\BfW^{1/2}\BfQ^{\dag}\BfW^{1/2})\big)
\in(0,1], 
\] 
with $\BfW\in\BR^{N\times N}$ the diagonal matrix such that $\BfW_{i,i}=\BfQ_{i,i}$, $i\in\BN$. 
Indeed, observing that
\[
\|\bdh\|^{2}_{\BfQ^{\dag}}
\leqslant\lambda_{\max}(\BfW^{1/2}\BfQ^{\dag}\BfW^{1/2})(\bdh^{T}\BfW^{-1}\bdh), 
\bdh\in\CH, 
\]
the inequality    
$\CI_{D}\big(\bdx;\bde_{i_{D,\BI,\bdx}}\big)\geqslant \tilde{\iota}_{\BfQ}D(\bdx)$
follows directly from \eqref{eq:NinIotaQ}. We have $\tilde{\iota}_{\BfQ}\geqslant\iota_{\BfQ}$. 
The term $\iota_{\BfQ}$ is nevertheless more easily interpretable as it relates 
to a discrete approximation of the condition number of $\BfQ$. 
\fin
\end{remark}

To implement an exact CD with BI rule (BI-CD, for short) 
for the minimisation of $D$, we select an initial iterate $\bdx^{(0)}\in\BR^{N}$, 
and we set $\bdx^{(k+1)}=\bdx^{(k)}+\rho^{(k)}\bde_{i^{(k)}}$, $k\in\BNz$, 
with $i^{(k)}=i_{D,\BI,\bdx^{(k)}}$ and $\rho^{(k)}$ given by \eqref{eq:OSS4D}. 
Theorem~\ref{thm:convergenceBICooForD} below
shows the convergence of such strategies; 
this result is a special instance of 
some classical results from the literature
(see for instance \cite{nutini2015coordinate}), 
it is presented for completeness  
and to illustrate parallels with the minimisation of $R$. 

\begin{theorem}\label{thm:convergenceBICooForD}
Consider the minimisation of $D$ over $\BR^{N}$; 
the sequence of iterates $\{\bdx^{(k)}\}_{k\in\BNz}$ generated by  
an exact CD with BI rule verifies 
$\lim_{k\to\infty}D(\bdx^{(k)})=0$, with 
\[
D(\bdx^{(k)})\leqslant (1-\iota_{\BfQ})^{k}D(\bdx^{(0)}), 
k\in\BNz.  
\]
\end{theorem}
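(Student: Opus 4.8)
The plan is to combine Lemma~\ref{lem:CoordImprovBoundD} with the observation that exact line search cannot increase $D$, and then iterate the resulting one-step contraction. First I would fix $k\in\BNz$ and write $\bdx^{(k+1)}=\bdx^{(k)}+\rho^{(k)}\bde_{i^{(k)}}$ with $i^{(k)}=i_{D,\BI,\bdx^{(k)}}$ and $\rho^{(k)}$ the exact-line-search step from \eqref{eq:OSS4D}. By definition of $\CI_{D}$ and of $\rho^{(k)}$ as a minimiser, we have
\[
D(\bdx^{(k+1)})=\min_{t\in\BR}D(\bdx^{(k)}+t\bde_{i^{(k)}})=D(\bdx^{(k)})-\CI_{D}\big(\bdx^{(k)};\bde_{i^{(k)}}\big).
\]
Since $i^{(k)}$ is chosen by the BI rule, $\CI_{D}(\bdx^{(k)};\bde_{i^{(k)}})=\CI_{D}(\bdx^{(k)};\bde_{i_{D,\BI,\bdx^{(k)}}})$, and Lemma~\ref{lem:CoordImprovBoundD} gives $\CI_{D}(\bdx^{(k)};\bde_{i^{(k)}})\geqslant\iota_{\BfQ}D(\bdx^{(k)})$. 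Substituting, $D(\bdx^{(k+1)})\leqslant D(\bdx^{(k)})-\iota_{\BfQ}D(\bdx^{(k)})=(1-\iota_{\BfQ})D(\bdx^{(k)})$.

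Next I would note that $D\geqslant0$ everywhere (since $D(\bdx)=\|\bdx-\alpb\|_{\BfQ}^{2}$), so the inequality $D(\bdx^{(k+1)})\leqslant(1-\iota_{\BfQ})D(\bdx^{(k)})$ is a genuine contraction with factor $1-\iota_{\BfQ}\in[0,1)$, because $\iota_{\BfQ}\in(0,1]$. A straightforward induction on $k$ starting from $k=0$ then yields $D(\bdx^{(k)})\leqslant(1-\iota_{\BfQ})^{k}D(\bdx^{(0)})$ for all $k\in\BNz$. Finally, taking $k\to\infty$ and using $1-\iota_{\BfQ}<1$ (or $=0$) gives $\lim_{k\to\infty}D(\bdx^{(k)})=0$, since $D(\bdx^{(k)})$ is squeezed between $0$ and a geometric sequence tending to $0$.

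There is essentially no hard part here: the argument is a textbook geometric-decay bound, and all the analytic content is already packaged in Lemma~\ref{lem:CoordImprovBoundD}. The only points requiring a word of care are (i) confirming that exact line search produces exactly the decrement $\CI_{D}(\bdx^{(k)};\bde_{i^{(k)}})$ rather than merely a lower bound on it, which is immediate from the definition of $\CI_{D}$ in Remark~\ref{rem:ELSForMiniD}; and (ii) handling the degenerate case $\iota_{\BfQ}=1$, where the contraction factor is $0$ and convergence is finite — the displayed bound still holds with the convention $0^{0}=1$ at $k=0$, and the limit statement is trivial. No separate treatment of stationary points is needed, because the bound forces $D(\bdx^{(k)})\to0$ regardless.
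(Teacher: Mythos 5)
Your proof is correct and follows essentially the same route as the paper: the one-step contraction $D(\bdx^{(k+1)})\leqslant(1-\iota_{\BfQ})D(\bdx^{(k)})$ obtained from Lemma~\ref{lem:CoordImprovBoundD} and the exactness of the line search, iterated to give the geometric bound and the limit. You simply spell out the intermediate steps (the identity $D(\bdx^{(k+1)})=D(\bdx^{(k)})-\CI_{D}(\bdx^{(k)};\bde_{i^{(k)}})$, nonnegativity of $D$, and the induction) that the paper leaves implicit.
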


\begin{proof}
Lemma~\ref{lem:CoordImprovBoundD} gives
\[
D(\bdx^{(k+1)})\leqslant (1-\iota_{\BfQ})D(\bdx^{(k)})
\leqslant (1-\iota_{\BfQ})^{k+1}D(\bdx^{(0)}),
k\in\BNz, 
\]
as expected. 
As $\iota_{\BfQ}\in(0,1]$, the assertion $\lim_{k\to\infty}D(\bdx^{(k)})=0$ follows directly. 
\end{proof}

\subsection{Minimisation of $R$}
\label{sec:MiniR} 
Contrary to the minimisation of $D$, for $R$, the BI, GSL and $\CH$ rules differ. 
Below, we for simplicity only consider the BI and $\CH$ rules. 
For $\bdx\in\coneC$, we set 
\[
i_{R,\BI,\bdx}
\in\arg\max_{i\in[N]}\CI_{R}(\bdx;\bde_{i}),  
\text{ and }
i_{R,\CH,\bdx}
\in\arg\max_{i\in[N]}
\CI_{D}(s_{\bdx}\bdx;\bde_{i}).
\]
Observe that 
\[
\CI_{R}(\bdx;\bde_{i})
=\frac{[s_{\bdx}\BfQ\bdx-\bdc]_{i}^{2}}{\BfQ_{i,i}-[\BfQ\bdx]_{i}^{2}/(\bdx^{T}\BfQ\bdx)}
\text{ and }
\CI_{D}(s_{\bdx}\bdx;\bde_{i})
=\frac{[s_{\bdx}\BfQ\bdx-\bdc]_{i}^{2}}{\BfQ_{i,i}}; 
\]
from a numerical standpoint, the computation of the $\CH$ coordinate 
is therefore more affordable than the computation of the $\BI$ coordinate
(see Remark~\ref{rem:AmountWork}).  
The BI rule accounts for the \emph{acceleration term} $\correc$, 
while the $\CH$ rule does not (see Remark~\ref{rem:ELSForMiniD}). 
The $\CH$ coordinate for $R$ at $\bdx$ corresponds to
the coordinate potential $\BfQ\bde_{i}$, $i\in[N]$, 
that aligns the most with $\nabla R(\bdx)$ in the RKHS $\CH$. 

To implement an exact CD
with $\CH$ rule ($\CH$-CD, for short) for the minimisation of $R$, 
we select an initial iterate $\bdx^{(0)}\in\coneC$, 
and we set $\bdx^{(k+1)}=\bdx^{(k)}+\tau^{(k)}\bde_{i^{(k)}}$, $k\in\BNz$, 
with $i^{(k)}=i_{R,\CH,\bdx^{(k)}}$ and $\tau^{(k)}$ given by Theorem~\ref{thm:RmapOptSS}; 
we proceed accordingly for the BI rule. 
Importantly, to ensure that the non-trivial line searches occur 
in the framework of \mbox{Theorem~\ref{thm:RmapOptSS}-\ref{item:OSSDesc}}, 
following Corollary~\ref{cor:AboutRxLessThanRu}, 
we introduce the \emph{initialisation condition} 
\begin{align}\label{eq:IniCondGene}
R(\bdx^{(0)})\leqslant \min_{i\in[N]}R\big(\sign(c_{i})\bde_{i}\big);  
\end{align}
see Remark~\ref{rem:AboutInit}.  
Observe that 
$R\big(\sign(c_{i})\bde_{i}\big)=\min\{R(-\bde_{i}),R(\bde_{i})\}$. 

\begin{theorem}\label{thm:convergenceHCooForR}
Consider the minimisation of $R$ over $\BR^{N}$;  
the sequence of iterates $\{\bdx^{(k)}\}_{k\in\BNz}$ generated by  
an exact CD with $\CH$ rule verifies 
$\lim_{k\to\infty}R(\bdx^{(k)})=0$, with
\begin{equation}\label{eq:RLinearConvIotaQ}
R(\bdx^{(k)})\leqslant (1-\iota_{\BfQ})^{k}R(\bdx^{(0)}), 
k\in\BNz.  
\end{equation}
\end{theorem}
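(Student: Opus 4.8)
The overall strategy mirrors the proof of Theorem~\ref{thm:convergenceBICooForD}: establish a per-iteration geometric decrease of the form $R(\bdx^{(k+1)})\leqslant(1-\iota_{\BfQ})R(\bdx^{(k)})$, then iterate. The key quantity to lower-bound is the improvement $\CI_R(\bdx^{(k)};\bde_{i^{(k)}})$ yielded by the $\CH$ coordinate. By Remark~\ref{rem:ELSForMiniD}, for $\bdx\in\coneC$ and a coordinate $\bde_i$ with $\BfQ\bdx$, $\BfQ\bde_i$ non-collinear, we have $\CI_R(\bdx;\bde_i)=\CI_D(s_{\bdx}\bdx;\bde_i)\,\correc(\bdx;\bde_i)\geqslant\CI_D(s_{\bdx}\bdx;\bde_i)$, since $\correc\geqslant1$. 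The $\CH$ rule picks $i^{(k)}$ maximising $\CI_D(s_{\bdx^{(k)}}\bdx^{(k)};\bde_i)$, which is exactly the BI/GSL coordinate for $D$ \emph{at the rescaled point} $s_{\bdx^{(k)}}\bdx^{(k)}$. Hence Lemma~\ref{lem:CoordImprovBoundD}, applied at $s_{\bdx^{(k)}}\bdx^{(k)}$, gives $\CI_D(s_{\bdx^{(k)}}\bdx^{(k)};\bde_{i^{(k)}})\geqslant\iota_{\BfQ}D(s_{\bdx^{(k)}}\bdx^{(k)})=\iota_{\BfQ}R(\bdx^{(k)})$, using $D(s_{\bdx}\bdx)=R(\bdx)$. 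Combining, $\CI_R(\bdx^{(k)};\bde_{i^{(k)}})\geqslant\iota_{\BfQ}R(\bdx^{(k)})$, and since the line search is exact, $R(\bdx^{(k+1)})=R(\bdx^{(k)})-\CI_R(\bdx^{(k)};\bde_{i^{(k)}})\leqslant(1-\iota_{\BfQ})R(\bdx^{(k)})$. Unrolling yields \eqref{eq:RLinearConvIotaQ}, and $\iota_{\BfQ}\in(0,1]$ forces $R(\bdx^{(k)})\to0$.

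The delicate point — and the place where the initialisation condition \eqref{eq:IniCondGene} and Corollary~\ref{cor:AboutRxLessThanRu} enter — is ensuring that every iterate remains in $\coneC$ and that each non-trivial line search actually realises the improvement $\CI_R$ predicted by Lemma~\ref{lem:ImprovVal}, i.e. that it occurs in the regime of Theorem~\ref{thm:RmapOptSS}-\ref{item:OSSDesc} rather than the monotonic regime~\ref{item:NonStopDesc}. This requires an inductive argument: assuming $\bdx^{(k)}\in\coneC$ with $R(\bdx^{(k)})\leqslant\min_{i\in[N]}R(\sign(c_i)\bde_i)=\min_{i\in[N]}\min\{R(-\bde_i),R(\bde_i)\}$, I would check that the chosen coordinate $\bde_{i^{(k)}}$ satisfies the hypotheses of Corollary~\ref{cor:AboutRxLessThanRu} (namely $R(\bdx^{(k)})\leqslant\min\{R(-\bde_{i^{(k)}}),R(\bde_{i^{(k)}})\}$, which is immediate from the invariant, and $\Lambda(\bdx^{(k)};\bde_{i^{(k)}})\neq0$ in the non-trivial case), so that $\Upsilon(\bde_{i^{(k)}};\bdx^{(k)})>0$ and the line search lands at $\bdz_\tau\in\coneC$ with the full span-minimality $R(\bdx^{(k+1)})=\min_{\bdz\in\vspan\{\bdx^{(k)},\bde_{i^{(k)}}\}}R(\bdz)$. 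Since $R(\bdx^{(k+1)})\leqslant R(\bdx^{(k)})$, the invariant $R(\bdx^{(k+1)})\leqslant\min_{i\in[N]}R(\sign(c_i)\bde_i)$ is preserved, closing the induction; note \eqref{eq:IniCondGene} is precisely this invariant at $k=0$. In the trivial cases ($\BfQ\bdx^{(k)}$, $\BfQ\bde_{i^{(k)}}$ collinear, or $\Lambda(\bdx^{(k)};\bde_{i^{(k)}})=0$) one has $\CI_R(\bdx^{(k)};\bde_{i^{(k)}})=0$ by Lemma~\ref{lem:ImprovVal}; but then $\CI_D(s_{\bdx^{(k)}}\bdx^{(k)};\bde_{i^{(k)}})=0$ too, which by the maximality defining the $\CH$ rule and Lemma~\ref{lem:CoordImprovBoundD} forces $D(s_{\bdx^{(k)}}\bdx^{(k)})=R(\bdx^{(k)})=0$, so the bound holds trivially from that index onward and the iteration may be taken to stagnate.

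The main obstacle is therefore not the geometric-decay computation — that is a two-line consequence of Lemma~\ref{lem:CoordImprovBoundD} applied at the rescaled iterate together with $\correc\geqslant1$ — but the bookkeeping that guarantees the descent stays in the pseudoconvex cone and in the ``good'' branch of Theorem~\ref{thm:RmapOptSS}. Once the invariant $R(\bdx^{(k)})\leqslant\min_{i\in[N]}R(\sign(c_i)\bde_i)$ is shown to propagate (which is where Corollary~\ref{cor:AboutRxLessThanRu} does the work), everything else is routine, and I would keep that part brief, perhaps deferring the fully detailed invariant-propagation argument to the discussion around Remark~\ref{rem:AboutInit}.
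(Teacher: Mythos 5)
Your proposal is correct and follows essentially the same route as the paper's proof: the factorisation $\CI_{R}=\CI_{D}(s_{\bdx}\bdx;\cdot)\,\correc\geqslant\CI_{D}(s_{\bdx}\bdx;\cdot)$ from Remark~\ref{rem:ELSForMiniD}, the identification of the $\CH$ coordinate with the BI coordinate for $D$ at the rescaled iterate, and Lemma~\ref{lem:CoordImprovBoundD} applied at $s_{\bdx^{(k)}}\bdx^{(k)}$, with Theorem~\ref{thm:RmapOptSS}, Corollary~\ref{cor:AboutRxLessThanRu} and the initialisation condition~\eqref{eq:IniCondGene} keeping the iterates in $\coneC$ and in the favourable line-search regime. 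Your explicit inductive invariant and your treatment of the degenerate cases (collinear potentials or $\Lambda=0$ forcing $R(\bdx^{(k)})=0$) simply spell out what the paper handles more tersely.
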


\begin{proof}
From Theorem~\ref{thm:RmapOptSS} and Corollary~\ref{cor:AboutRxLessThanRu}, 
observe that $\{\bdx^{(k)}\}_{k\in\BNz}\subset\coneC$;  
also, if there exists $k\in\BNz$ such that 
$\BfQ\bdx^{(k)}$ and $\BfQ\bde_{i^{(k)}}$ are collinear, 
then $R(\bdx^{(k)})=0$. 
For $k\in\BNz$, assume that $\BfQ\bdx^{(k)}$ and $\BfQ\bde_{i^{(k)}}$
are non-collinear;  
since a $\CH$ coordinate for $R$ at $\bdx\in\coneC$ is equivalent to 
a $\BI$ coordinate for $D$ at $s_{\bdx}\bdx$, 
from Remark~\ref{rem:ELSForMiniD}
and Lemma~\ref{lem:CoordImprovBoundD}, 
we get  
\begin{align*}
\CI_{R}(\bdx^{(k)};\bde_{i_{R,\CH,\bdx^{(k)}}})
&=\CI_{D}(s_{\bdx^{(k)}}\bdx^{(k)};\bde_{i_{R,\CH,\bdx^{(k)}}})
\correc(\bdx^{(k)};\bde_{i_{R,\CH,\bdx^{(k)}}}) \\ 
&\geqslant\CI_{D}(s_{\bdx^{(k)}}\bdx^{(k)};\bde_{i_{R,\CH,\bdx^{(k)}}})
\geqslant\iota_{\BfQ}D(s_{\bdx^{(k)}}\bdx^{(k)})
=\iota_{\BfQ}R(\bdx^{(k)}). 
\end{align*}
We hence obtain \eqref{eq:RLinearConvIotaQ} 
and $R(\bdx^{(k)})\to 0$, as expected. 
\end{proof}

\begin{corollary}\label{cor:BIvariant}
The assertions of Theorem~\ref{thm:convergenceHCooForR} also hold 
for an exact CD with $\BI$ rule for the minimisation of $R$. 
\end{corollary}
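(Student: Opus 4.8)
The plan is to mirror the proof of Theorem~\ref{thm:convergenceHCooForR}, replacing the per-iteration improvement bound for the $\CH$ coordinate with the observation that the $\BI$ coordinate is, by definition, at least as good. First I would note that the initialisation condition~\eqref{eq:IniCondGene} together with Corollary~\ref{cor:AboutRxLessThanRu} still guarantees that all non-trivial line searches occur in the framework of Theorem~\ref{thm:RmapOptSS}-\ref{item:OSSDesc}: this argument only uses that $R(\bdx^{(k)})\leqslant\min\{R(-\bde_{i}),R(\bde_{i})\}$ for every $i\in[N]$, and that the exact line search along any coordinate can only decrease $R$, so the inequality propagates to all iterates regardless of which coordinate-selection rule is used. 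Hence $\{\bdx^{(k)}\}_{k\in\BNz}\subset\coneC$ as before, and the case where $\BfQ\bdx^{(k)}$ and $\BfQ\bde_{i^{(k)}}$ are collinear again forces $R(\bdx^{(k)})=0$ by Theorem~\ref{thm:RmapOptSS}.

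Next, for the non-collinear case, the key step is the chain
\[
\CI_{R}\big(\bdx^{(k)};\bde_{i_{R,\BI,\bdx^{(k)}}}\big)
\geqslant
\CI_{R}\big(\bdx^{(k)};\bde_{i_{R,\CH,\bdx^{(k)}}}\big)
\geqslant \iota_{\BfQ}R(\bdx^{(k)}),
\]
where the first inequality is immediate from the definition of $i_{R,\BI,\bdx^{(k)}}$ as an argmax of $\bde_i\mapsto\CI_R(\bdx^{(k)};\bde_i)$, and the second is exactly the bound established inside the proof of Theorem~\ref{thm:convergenceHCooForR}. (One should be slightly careful that $i_{R,\BI,\bdx^{(k)}}$ itself might give a collinear pair while $i_{R,\CH,\bdx^{(k)}}$ does not, but in that case the $\BI$ improvement is $0$ by Lemma~\ref{lem:ImprovVal}, which would contradict it being the best improvement unless $\CI_R(\bdx^{(k)};\bde_i)=0$ for all $i$; by the displayed chain applied with the $\CH$ coordinate this forces $R(\bdx^{(k)})=0$, so the conclusion holds trivially.) From $\CI_R(\bdx^{(k)};\bde_{i^{(k)}})\geqslant \iota_{\BfQ}R(\bdx^{(k)})$ we get $R(\bdx^{(k+1)})\leqslant(1-\iota_{\BfQ})R(\bdx^{(k)})$, hence~\eqref{eq:RLinearConvIotaQ} by induction, and $R(\bdx^{(k)})\to 0$ since $\iota_{\BfQ}\in(0,1]$.

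I expect no real obstacle here — the argument is essentially a one-line reduction. The only point requiring a little care is the interplay between the two rules when the $\BI$ coordinate produces a collinear potential pair (zero improvement); handling that degenerate case as above keeps the proof clean. Everything else is a verbatim repetition of the Theorem~\ref{thm:convergenceHCooForR} argument with $i_{R,\CH,\bdx^{(k)}}$ replaced by $i_{R,\BI,\bdx^{(k)}}$ and one extra inequality inserted.
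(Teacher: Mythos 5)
Your proposal is correct and follows essentially the same route as the paper: the whole content of the corollary is the inequality $\CI_{R}(\bdx;\bde_{i_{R,\BI,\bdx}})\geqslant\CI_{R}(\bdx;\bde_{i_{R,\CH,\bdx}})\geqslant\iota_{\BfQ}R(\bdx)$, which is exactly the paper's one-line argument; your extra remarks on the initialisation condition and the degenerate collinear case are harmless elaborations of points already covered in the proof of Theorem~\ref{thm:convergenceHCooForR}.
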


\begin{proof}
For all $\bdx\in\coneC$, 
we have 
$\CI_{R}(\bdx;\bde_{i_{R,\BI,\bdx}})
\geqslant \CI_{R}(\bdx;\bde_{i_{R,\CH,\bdx}})\geqslant\iota_{\BfQ}R(\bdx)$, 
so that inequality~\eqref{eq:RLinearConvIotaQ}
also holds for the $\BI$ rule. 
\end{proof}

\begin{remark}[Minimisation of $R$ and initialisation]\label{rem:AboutInit}
Following Remark~\ref{rem:AboutCoordDesc}, to promote sparsity 
while ensuring that condition~\eqref{eq:IniCondGene} holds, 
exact CDs for the minimisation of $R$ 
can be initialised at 
$\bdx^{(0)}=\sign(c_{i_{R,\BI,0}})\bde_{i_{R,\BI,0}}$, 
so that 
\begin{align*}
R(\bdx^{(0)})=\min_{i\in[N]}R\big(\sign(c_{i})\bde_{i}\big).
\end{align*}  
From a numerical standpoint, 
$i_{R,\BI,0}$ can be deduced from $\bdc$ and the diagonal of $\BfQ$, 
with an $\CO(N)$ time complexity.  
As a technical note, in the experiments of Section~\ref{sec:Experiments}, 
in order to compare various strategies 
with initialisation at $0$, we use the convention 
$\bdx^{(0)}=0$ and $\bdx^{(1)}=\sign(c_{i_{R,\BI,0}})\bde_{i_{R,\BI,0}}$. 
\fin
\end{remark}

\subsection{Asymptotic acceleration}
\label{sec:CorrecAcc}
Theorems~\ref{thm:convergenceBICooForD} and \ref{thm:convergenceHCooForR} 
provide the same upper bound $(1-\iota_{\BfQ})$ for the asymptotic convergence rates 
of the considered strategies for the minimisation of $D$ and $R$. 
These results therefore do not a priori suggest any potential asymptotic benefit in considering 
the minimisation of $R$ instead of $D$. 
Below, we illustrate that for the minimisation of $R$, 
an improved bound can be obtained by accounting for 
the asymptotic behaviour of the acceleration term $\correc$ 
introduced in Remark~\ref{rem:ELSForMiniD}. 

We introduce 
\begin{align}\label{eq:AccelInftyInf}
\correcT_{\infty}
=\min_{i\in[N]}\big(1-c_{i}^{2}/(\BfQ_{i,i}\|\bdc\|_{\BfQ^{\dag}}^{2})\big)^{-1}\geqslant 1, 
\end{align}
and for $\varepsilon>0$, we set $\correcT_{\infty,\varepsilon}=(1-\varepsilon)\correcT_{\infty}$. 

\begin{theorem}\label{thm:AsympCorr}
Consider the minimisation of $R$ over $\BR^{N}$; 
let $\{\bdx^{(k)}\}_{k\in\BNz}$ be the sequence of iterates generated by  
an exact CD with $\CH$ or BI rule. 
For $\varepsilon\in(0,1)$, there exists $k_{\varepsilon}\in\BNz$ 
such that for all $k\geqslant k_{\varepsilon}$, we have 
\[
R(\bdx^{(k)})\leqslant (1-\iota_{\BfQ}\correcT_{\infty,\varepsilon})^{(k-k_{\varepsilon})}R(\bdx^{(k_{\varepsilon})}).
\] 
\end{theorem}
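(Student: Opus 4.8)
The plan is to refine the argument of Theorem~\ref{thm:convergenceHCooForR} by extracting an improved estimate of the acceleration term $\correc(\bdx^{(k)};\bde_{i^{(k)}})$ as $k\to\infty$. First I would recall from Theorem~\ref{thm:convergenceHCooForR} that $\{\bdx^{(k)}\}\subset\coneC$ and $R(\bdx^{(k)})\to 0$; by invariance under rescaling we may work with the rescaled iterates $\bdy^{(k)}=s_{\bdx^{(k)}}\bdx^{(k)}$, which satisfy $D(\bdy^{(k)})=R(\bdx^{(k)})\to 0$, i.e. $\BfQ\bdy^{(k)}\to\bdc$ in the $\|\cdot\|_{\BfQ^{\dag}}$-seminorm (equivalently $P_{\BfQ\bdx^{(k)}}\bdc\to\bdc$). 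The key geometric observation is that $\correc(\bdx;\bde_i)=1/\sin^2\theta_i(\bdx)$ where $\theta_i(\bdx)$ is the angle in $\CH$ between the potentials $\BfQ\bdx$ and $\BfQ\bde_i$, and since $\BfQ\bdy^{(k)}$ has the same direction as $\BfQ\bdx^{(k)}$, we have $\correc(\bdx^{(k)};\bde_i)=\bigl(1-\mip[]{\BfQ\bdy^{(k)},\BfQ\bde_i}_{\BfQ^{\dag}}^2/(\|\BfQ\bdy^{(k)}\|_{\BfQ^{\dag}}^2\|\BfQ\bde_i\|_{\BfQ^{\dag}}^2)\bigr)^{-1}$. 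As $\BfQ\bdy^{(k)}\to\bdc$, this converges (uniformly in $i$, since $[N]$ is finite) to $\bigl(1-\mip[]{\bdc,\BfQ\bde_i}_{\BfQ^{\dag}}^2/(\|\bdc\|_{\BfQ^{\dag}}^2\|\BfQ\bde_i\|_{\BfQ^{\dag}}^2)\bigr)^{-1}=\bigl(1-c_i^2/(\BfQ_{i,i}\|\bdc\|_{\BfQ^{\dag}}^2)\bigr)^{-1}$, using the reproducing property $\mip[]{\bdc,\BfQ\bde_i}_{\BfQ^{\dag}}=c_i$ and $\|\BfQ\bde_i\|_{\BfQ^{\dag}}^2=\BfQ_{i,i}$.

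Next I would turn this convergence into the stated bound. The coordinate actually selected at step $k$ is $i^{(k)}=i_{R,\CH,\bdx^{(k)}}$ (or the BI analogue), so I need a lower bound on $\correc(\bdx^{(k)};\bde_{i^{(k)}})$ valid for the \emph{selected} index, not merely pointwise in $i$. Since $\correcT_\infty=\min_{i\in[N]}\bigl(1-c_i^2/(\BfQ_{i,i}\|\bdc\|_{\BfQ^{\dag}}^2)\bigr)^{-1}$ and the convergence $\correc(\bdx^{(k)};\bde_i)\to(1-c_i^2/(\BfQ_{i,i}\|\bdc\|_{\BfQ^{\dag}}^2))^{-1}$ is uniform over the finite set $[N]$, given $\varepsilon\in(0,1)$ there exists $k_\varepsilon\in\BNz$ such that $\correc(\bdx^{(k)};\bde_i)\geqslant(1-\varepsilon)\correcT_\infty=\correcT_{\infty,\varepsilon}$ for all $i\in[N]$ and all $k\geqslant k_\varepsilon$; in particular this holds for $i=i^{(k)}$. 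Combining this with the chain of inequalities from the proof of Theorem~\ref{thm:convergenceHCooForR}, namely
\[
\CI_R(\bdx^{(k)};\bde_{i^{(k)}})
=\CI_D(\bdy^{(k)};\bde_{i^{(k)}})\,\correc(\bdx^{(k)};\bde_{i^{(k)}})
\geqslant \correcT_{\infty,\varepsilon}\,\iota_{\BfQ}\,D(\bdy^{(k)})
=\iota_{\BfQ}\correcT_{\infty,\varepsilon}\,R(\bdx^{(k)}),
\]
(where $\CI_D(\bdy^{(k)};\bde_{i^{(k)}})\geqslant\iota_{\BfQ}D(\bdy^{(k)})$ follows from Lemma~\ref{lem:CoordImprovBoundD} since $i^{(k)}$ is a BI coordinate for $D$ at $\bdy^{(k)}$, and $\CI_R(\bdx^{(k)};\bde_{i^{(k)}})=R(\bdx^{(k)})-R(\bdx^{(k+1)})$), we get $R(\bdx^{(k+1)})\leqslant(1-\iota_{\BfQ}\correcT_{\infty,\varepsilon})R(\bdx^{(k)})$ for all $k\geqslant k_\varepsilon$, and the stated bound follows by iterating. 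For the BI rule, Corollary~\ref{cor:BIvariant} shows $\CI_R(\bdx^{(k)};\bde_{i_{R,\BI,\bdx^{(k)}}})\geqslant\CI_R(\bdx^{(k)};\bde_{i_{R,\CH,\bdx^{(k)}}})$, so the same bound holds.

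The main obstacle I anticipate is making the uniformity argument fully rigorous: one must check that $\correc(\bdx^{(k)};\bde_i)$ is indeed a continuous function of the \emph{direction} of $\BfQ\bdx^{(k)}$ in $\CH$ and that no degeneracy occurs — specifically, that $\BfQ\bdx^{(k)}$ and $\BfQ\bde_i$ never become collinear for $k$ large (which would force $R(\bdx^{(k)})=0$, a case that can be handled separately) and that $\|\BfQ\bdy^{(k)}\|_{\BfQ^{\dag}}$ stays bounded away from $0$. Since $\BfQ\bdy^{(k)}\to\bdc\neq 0$, the denominator $\|\BfQ\bdy^{(k)}\|_{\BfQ^{\dag}}^2\to\|\bdc\|_{\BfQ^{\dag}}^2>0$ is eventually bounded below, and collinearity of $\BfQ\bdy^{(k)}$ with some $\BfQ\bde_i$ together with $\BfQ\bdy^{(k)}\to\bdc$ would force $\bdc\parallel\BfQ\bde_i$, whence $\correcT_\infty$ itself would be infinite and the bound trivial; so these degeneracies are benign. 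A secondary point worth stating carefully is that $k_\varepsilon$ depends on $\varepsilon$ but not on $i$, which is exactly why the finiteness of $[N]$ is used.
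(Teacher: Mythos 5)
Your proposal is correct and follows essentially the same route as the paper: convergence of $R(\bdx^{(k)})$ to $0$ gives $\BfQ\bdx^{(k)}/\|\BfQ\bdx^{(k)}\|_{\BfQ^{\dag}}\to\bdc/\|\bdc\|_{\BfQ^{\dag}}$, hence $\correc(\bdx^{(k)};\bde_{i})\to\big(1-c_{i}^{2}/(\BfQ_{i,i}\|\bdc\|_{\BfQ^{\dag}}^{2})\big)^{-1}\geqslant\correcT_{\infty}$ uniformly over the finite set $[N]$, which is then injected into the chain of inequalities from Theorem~\ref{thm:convergenceHCooForR} via Remark~\ref{rem:ELSForMiniD} and Lemma~\ref{lem:CoordImprovBoundD}, with the BI rule covered as in Corollary~\ref{cor:BIvariant}. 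The only cosmetic difference is the degenerate case $\bdc$ collinear with some $\BfQ\bde_{i}$ (where $\correcT_{\infty}$ is undefined/infinite): the paper disposes of it via the initialisation condition~\eqref{eq:IniCondGene}, which forces $R(\bdx^{(0)})=0$ so the bound holds with both sides zero, rather than by declaring the bound ``trivial''.
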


\begin{proof}
From \eqref{eq:IniCondGene}, if there exists $i\in[N]$ such that 
$\bdc=\beta\BfQ\bde_{i}$, $\beta\in\BR$, 
then $R(\bdx^{(0)})=0$ and the result holds; we now assume that no such $i$ exists. 
From Theorem~\ref{thm:convergenceHCooForR},
as $R(\bdx^{(k)})\to 0$, we have 
$\BfQ\bdx^{(k)}/\|\BfQ\bdx^{(k)}\|_{\BfQ^{\dag}}\to\bdc/\|\bdc\|_{\BfQ^{\dag}}$.  
For all $i\in[N]$, we get 
\begin{align*}
\correc(\bdx^{(k)};\bde_{i})
\to\big(1-c_{i}^{2}/(\BfQ_{i,i}\|\bdc\|_{\BfQ^{\dag}}^{2})\big)^{-1}
\geqslant \correcT_{\infty}. 
\end{align*}  
Hence, for $\varepsilon>0$, there exists $k_{\varepsilon}\in\BNz$ 
such that for all $k\geqslant k_{\varepsilon}$, 
$\correc(\bdx^{(k)};\bde_{i_{D,\BI,\bdx^{(k)}}})
\geqslant \correcT_{\infty,\varepsilon}$; 
for $\varepsilon<1$, we in addition have $\correcT_{\infty,\varepsilon}>0$. 
From Remark~\ref{rem:ELSForMiniD}
and Lemma~\ref{lem:CoordImprovBoundD}, we obtain 
\begin{align*}
\CI_{R}(\bdx^{(k)};\bde_{i_{R,\CH,\bdx^{(k)}}})
&=\CI_{D}(s_{\bdx^{(k)}}\bdx^{(k)};\bde_{i_{R,\CH,\bdx^{(k)}}})
\correc(\bdx^{(k)};\bde_{i_{R,\CH,\bdx^{(k)}}}) \\ 
&\geqslant\CI_{D}(s_{\bdx^{(k)}}\bdx^{(k)};\bde_{i_{R,\CH,\bdx^{(k)}}})\correcT_{\infty,\varepsilon}
\geqslant\iota_{\BfQ}\correcT_{\infty,\varepsilon}R(\bdx^{(k)}),  
\end{align*}
completing the proof. 
Observe that $\iota_{\BfQ}\correcT_{\infty,\varepsilon}\in(0,1]$.  
\end{proof}

Theorem~\ref{thm:AsympCorr} entails that  
the asymptotic convergence rate for the minimisation of $R$ using exact 
CD with $\CH$ or BI rule is upper bounded by $(1-\iota_{\BfQ}\correcT_{\infty})$, 
against $(1-\iota_{\BfQ})$ for the minimisation of $D$. 
This result suggests that in situations where 
$\correcT_{\infty}$ is large, one can potentially achieve a faster asymptotic rate 
by minimising $R$ instead of $D$. 
Examples in which such an acceleration occurs are presented in 
Section~\ref{sec:Experiments}. 
Observe that $\correcT_{\infty}$ depends on $\|\bdc\|_{\BfQ^{\dag}}$, 
so that in practical applications, 
this value cannot be computed a priori. 
A schematic representation of the difference between the iterates 
of exact line searches for the minimisation of $D$ and $R$ 
is provided in Figure~\ref{fig:Illustr_Improv}. 

\begin{figure}[htbp]
\centering
\includegraphics[width=0.43\linewidth]{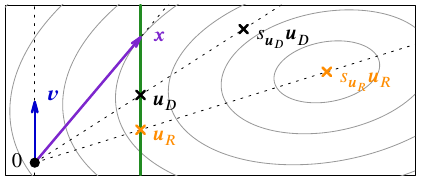}
\caption{Schematic representation of the difference between the iterates 
of exact line searches for the minimisation of $D$ and $R$. 
The line searches are implemented from $\bdx$ and along $\bdv$.  
The iterate obtained for $D$ is $\bdu_{D}$, 
and $\bdu_{R}$ is the iterate for $R$; 
the optimally rescaled iterates are also presented. 
The initial vector $\bdx$ is such that $s_{\bdx}=1$. 
The grey ellipses are level sets of the map $D$ on $\vspan\{\bdx,\bdv\}$. }
\label{fig:Illustr_Improv}
\end{figure}

\begin{remark}[Simple rescaling]\label{rem:MiniDSimpRescall}
An intermediate between the minimisation of $D$ and $R$ 
consists of minimising $D$ while systematically rescaling the iterates. 
Considering an exact CD with BI rule, 
the resulting algorithm can be described as follows: 
we select an initial vector $\bdu^{(0)}\in\BR^{N}$,  
and we set
\[ 
\text{$\bdx^{(k)}=s_{\bdu^{(k)}}\bdu^{(k)}$ 
and $\bdu^{(k+1)}=\bdx^{(k)}+\rho^{(k)}\bde_{i^{(k)}}$, $k\in\BNz$, }
\]
with $i^{(k)}=i_{D,\BI,\bdx^{(k)}}$ and $\rho^{(k)}$ given by \eqref{eq:OSS4D}. 
From Lemma~\ref{lem:CoordImprovBoundD}, we in this case have  
\begin{align*}
D(\bdx^{(k)})-D(\bdx^{(k+1)})
\geqslant D(\bdx^{(k)})-D(\bdu^{(k+1)})
\geqslant \iota_{\BfQ}D(\bdx^{(k)}), 
\end{align*}
and so $D(\bdx^{(k)})\to 0$,  
with $D(\bdx^{(k)})\leqslant (1-\iota_{\BfQ})D(\bdx^{(0)})$, $k\in\BNz$. 
As $D(\bdu^{(k+1)})\leqslant D(\bdx^{(k)})$, we also have 
$D(\bdu^{(k)})\to 0$, and so $s_{\bdu^{(k)}}\to 1$. 
Consequently, 
when compared to an exact BI-CD for the minimisation of $D$, 
the simple rescaling of the iterates does not impact 
the asymptotic convergence rate of the procedure. 
Observe nevertheless that the rescaling might affect the 
early stages of the optimisation (non-asymptotic regime);   
see Section~\ref{sec:Experiments} for illustrations.  
\fin
\end{remark}

\begin{remark}[Amount of work]\label{rem:AmountWork}
Numerically, an iteration of an exact BI-CD for the minimisation of $D$
involves finding the maximum entry of a vector of size $N$ 
(selection of the BI coordinate) and $4N+2$ flops (reducing to $3N+2$ with fused multiply-add);   
$2N$ flops are indeed needed to compute the improvement 
yielded by each coordinate (see Remark~\ref{rem:ELSForMiniD}), 
\mbox{$2$ flops} for the computation of the optimal step size $\rho$
and the update of the iterate $\bdx$, 
and $2N$ flops for the update of $\BfQ\bdx-\bdc$. 
In comparison,  
implementing an exact $\CH$-CD for the minimisation of $R$
requires $2N+13$ additional flops per iteration 
(reducing to $N+9$ flops with fused multiply-add); 
indeed, $4$ additional flops are required to compute 
$\tau$ (the optimal step size for $R$) instead of $\rho$, 
updating $s_{\bdx}$ takes $9$ flops 
($6$ flops for the update of $\bdx^{T}\BfQ\bdx$, $2$ for $\bdc^{T}\bdx$, and a division), 
and the update of  $\BfQ\bdx$ and $s_{\bdx}\BfQ\bdx-\bdc$ 
involves $2N$ additional flops (more precisely, this update involves $4N$ flops, 
against $2N$ flops for the update of $\BfQ\bdx-\bdc$ when minimising $D$).  
Implementing \mbox{BI-CD} instead of $\CH$-CD for the minimisation of $R$ 
requires $3N$ additional flops per iteration 
(computation of $i_{R,\BI,\bdx}$ instead of $i_{R,\CH,\bdx}$; reducing to $2N$ with fused multiply-add). 
\fin
\end{remark}

\section{Experiments}
\label{sec:Experiments}
We now illustrate the behaviour of the discussed strategies on a series of examples. 
For the minimisation of $D$, we consider CG (referred to as CG-$D$ in the figures), 
as well as CD with BI rule (CD-$D$) and its simple-rescaling variant 
(SR-$D$; see Remark~\ref{rem:MiniDSimpRescall}). 
For the minimisation of $R$, we consider CD with $\CH$ and BI rules 
(in the figures, we use the notations $\mathcal{H}$-$R$ and BI-$R$, respectively). 
All the minimisations are initialised at $0$; see Remark~\ref{rem:AboutInit}. 
The various strategies are compared in terms of their number of matrix-column calls, 
so that a single CG iteration is compared with $N$ CD iterations. 
When minimising $R$, we report the evolution of 
$D(s_{\bdx^{(k)}}\bdx^{(k)})=R(\bdx^{(k)})$, $k\in\BNz$.  

The focus of the forthcoming experiments is to demonstrate 
the existence of situations where the acceleration mechanisms discussed in the previous sections 
(that is, asymptotic acceleration and acceleration induced by simple rescaling) 
materialise. To further illustrate the asymptotic acceleration phenomenon 
(see Section~\ref{sec:CorrecAcc}), 
following \eqref{eq:AccelInftyInf}, we introduce 
\begin{align*}
\correcT_{\infty}^{\up}
=\max_{i\in[N]}\big(1-c_{i}^{2}/(\BfQ_{i,i}\|\bdc\|_{\BfQ^{\dag}}^{2})\big)^{-1},  
\end{align*}
corresponding to the maximum asymptotic value of the acceleration term $\correc$
induced by the minimisation of $R$ instead of $D$ via CD with gradient-based rules 
(see Remark~\ref{rem:ELSForMiniD}). 

In all this section, for a matrix $\BfX\in\BR^{N\times m}$, 
the notation $\BfX \sim U(a,b)$ indicates that the entries of $\BfX$ are independent realisations 
of a random variable with uniform distribution on the interval $[a,b]$, $a$ and $b\in\BR$; 
a similar convention holds for vectors (case $m=1$). 
We denote by $\BfI_{N}$ the $N\times N$ identity matrix, 
and we set $\one_{N}=(1)_{i\in[N]}$. 
Unless otherwise stated (see Example~\ref{ex:Examp6}), we use $N=500$. 

\begin{example}\label{ex:Examp1}
We set $\BfQ=\BfX\BfX^{T}$, with $m=250$ and  $\BfX \sim U(2,4)$;   
to ensure the existence of solutions, we also set $\bdc=\BfQ\alpb$, with $\alpb \sim U(-2,2)$. 
The values of the parameters were chosen so that the resulting random quadratic maps exhibit 
a variety of values of $\correcT_{\infty}$. 
The results are presented in Figure~\ref{fig:decaywithaccele}.   
For the considered minimisation strategies,  
the decay of $D$ as a function 
of the number of iterations (compared in terms of the number of matrix-column calls)  
is displayed for two different random quadratic maps.  
In the case $\correcT_{\infty}\approx 1.03$, we observe no noticeable 
differences between the various CD strategies considered 
for the minimisation of $D$ or $R$. 
In the case $\correcT_{\infty}\approx 18.33$, 
the CDs applied to the minimisation of $R$ converge significantly 
faster than their counterparts applied to the minimisation of $D$, 
supporting the conclusions drawn from Theorem~\ref{thm:AsympCorr} 
(that is, asymptotic acceleration); notably, 
the minimisation of $R$ via CD in this case outperforms CG 
(in the considered iteration range and when compared in terms of the number of matrix-column calls). 
Figure~\ref{fig:decaywithaccele} also displays 
the empirical distributions (kernel-density estimates) 
of $\mathfrak{a}_{\infty}$ and $\mathfrak{a}_{\infty}^{\text{up}}$ 
obtained from $10{,}000$ randomly-generated quadratic maps, 
illustrating that the asymptotic-acceleration phenomenon is relatively 
common in the considered setting. 
In Figure~\ref{fig:highaccele}, we illustrate that in the framework of Example~\ref{ex:Examp1}, 
using $\alpb \sim U(0,1)$ instead of $\alpb \sim U(-2,2)$ makes the
asymptotic-acceleration phenomenon even more prevalent. 
\end{example}  

\begin{figure}[!ht]
\centering
\includegraphics[width=0.9\linewidth]{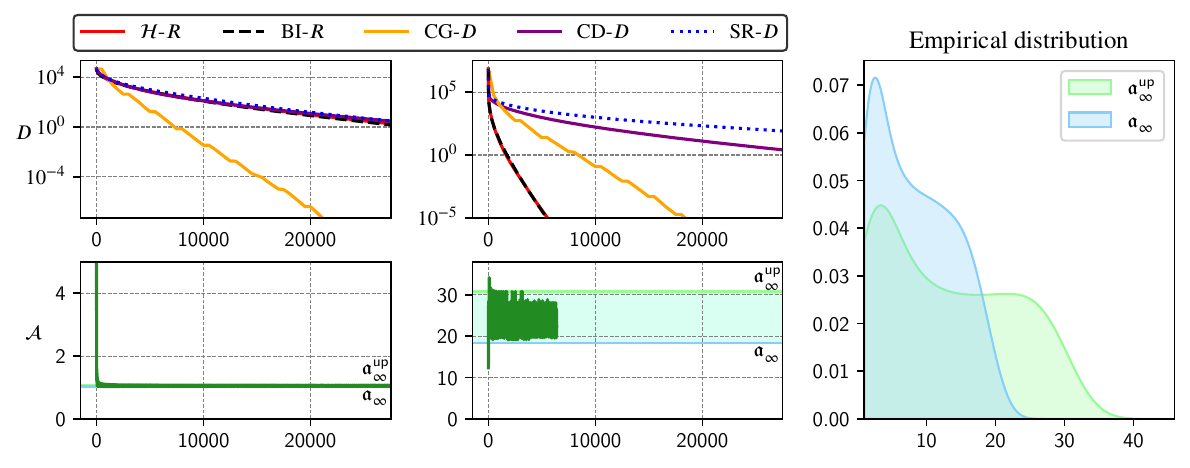}
\caption{For $\BfQ=\BfX\BfX^{T}$, $m=250$, $\BfX \sim U(2,4)$ and $\alpb \sim U(-2,2)$  
(see Example~\ref{ex:Examp1}), and for the considered minimisation strategies,   
decay of $D$ as a function of the number of iterations 
for two different random quadratic maps
(top-left and top-middle). 
For these two maps, and for the minimisation of $R$ via \mbox{$\CH$-CD}, 
the evolution of the acceleration term $\correc$ is displayed (bottom-left and bottom-middle).  
The empirical distributions of the terms 
$\mathfrak{a}_{\infty}$ and $\mathfrak{a}_{\infty}^{\text{up}}$ 
for $10{,}000$ randomly-generated quadratic maps are also presented (right). } 
 \label{fig:decaywithaccele}
\end{figure}

\begin{figure}[!ht]
\centering
\includegraphics[width=0.8\linewidth]{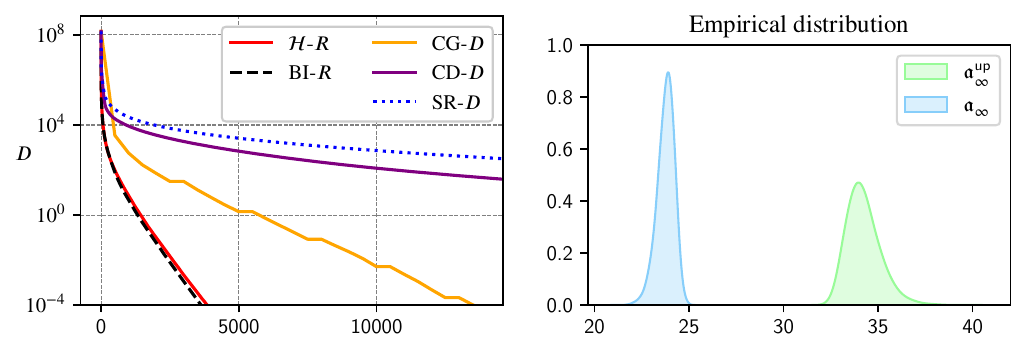}
\caption{Same setting as Figure~ \ref{fig:decaywithaccele}, but with $\alpb \sim U(0,1)$.   
For a randomly generated quadratic map, 
decay of $D$ as a function of the number of iterations for the considered minimisation strategies (left).  
The empirical distributions of $\mathfrak{a}_{\infty}$ and $\mathfrak{a}_{\infty}^{\text{up}}$ 
for $10{,}000$ random quadratic maps are also presented (right). }
\label{fig:highaccele}
\end{figure}

\begin{example}\label{ex:Examp2}
This example is a follow-up to Example~\ref{ex:Examp1}. 
We consider the same matrix $\BfX$ and vector $\bdc$ as in the top-middle plot of
Figure~\ref{fig:decaywithaccele}, but we set 
$\BfQ=\BfX\BfX^{T}+\gamma\BfI_{N}$, 
with $\gamma>0$. We use $\gamma=0.5$, $5$ and $50$;  
observe that the smallest non-zero eigenvalue of $\BfX\BfX^{T}$ is approximately $14$. 
The results are presented in Figure~\ref{fig:nugget}. 
In the three considered cases, the CDs applied to the minimisation of $R$ 
outperform their counterparts applied to the minimisation of $D$ 
(with $\mathfrak{a}_{\infty}\approx 18.25, 17.53$ and $12.65$, respectively),   
and when compared to CG, the efficiency of the CDs for the minimisation of $R$ increases with $\gamma$.  
\end{example} 

\begin{figure}[htbp]
\centering
\includegraphics[width=1\linewidth]{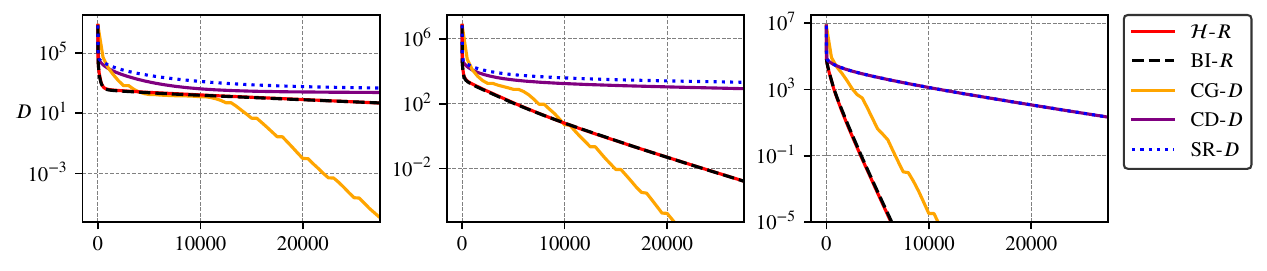}
\caption{For $\BfQ=\BfX\BfX^{T}+\gamma\BfI_N$, 
with $\gamma=0.5$, $5$ and $50$ (left to right), 
decay of the map $D$ as a function of the number of iterations 
for the considered minimisation strategies (see Example~\ref{ex:Examp2}).  
The matrix $\BfX$ and the vector $\bdc$ are the same as the ones used 
in the top-middle plot of Figure~\ref{fig:decaywithaccele}. }
\label{fig:nugget}
\end{figure}

\begin{example}\label{ex:Examp3}
We set $\BfQ=\BfX\BfX^{T}$ with $m=650$ and $\BfX \sim U(2,4)$. 
For the same realisation of $\BfQ$, we define three different solution vectors 
$\alpb$, with sparsity levels $50\%$ (that is, $50\%$ of the entries of $\alpb$ are $0$), 
$70\%$ and $90\%$, respectively; 
the non-zero entries of $\alpb$ are randomly generated from $U(-2,2)$,  
and we set $\bdc=\BfQ\alpb$. 
The values of the corresponding asymptotic-acceleration terms are
$\mathfrak{a}_{\infty}\approx 16.00$, $8.91$ and $7.81$, respectively. 
The results are presented in Figure~\ref{fig:sparse}. 
In the displayed iteration range, 
we observe that the sparser the solution is, 
the more the CDs compare positively against CG. 
For the three sparsity levels considered, 
the CDs applied to the minimisation of $R$ 
outperform their counterparts applied to the minimisation of $D$. 
\end{example}

\begin{figure}[!ht]
\centering
\includegraphics[width=1\linewidth]{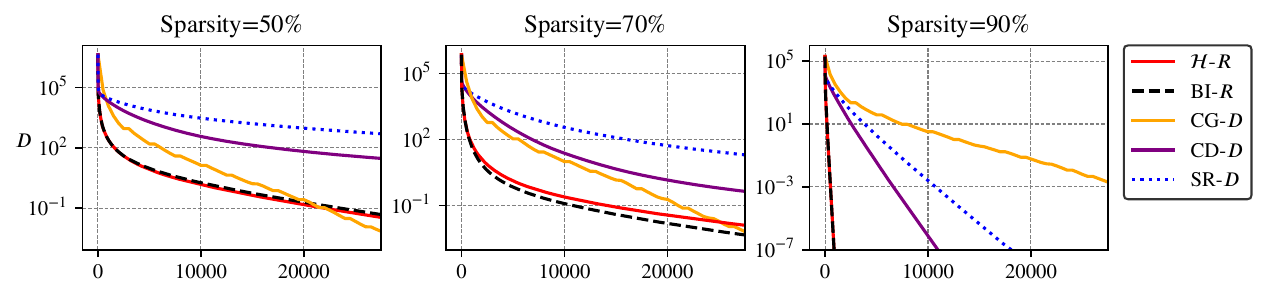}
\caption{For $\BfQ=\BfX\BfX^{T}$, with $m=650$ and $\BfX \sim U(2,4)$, 
and for varying sparsities of the solution $\alpb$ ($50\%$, $70\%$ and $90\%$; left to right), 
decay of $D$ as a function of the number of iterations 
for the considered minimisation strategies.   
The non-zero entries of $\alpb$ are generated from $U(-2,2)$; 
see Example~\ref{ex:Examp3}. }
\label{fig:sparse}
\end{figure}

\begin{example}\label{ex:Examp4}
We set $\BfQ=\BfX\BfX^{T}$ with $m=650$, $\BfX \sim U(-2,3)$ and ${\bdc \sim U(3,5)}$;   
the considered quadratic map verifies
$\correcT_{\infty} \approx \correcT_{\infty}^{\up} \approx 1$. 
The decay of $D$ for the considered minimisation strategies
is displayed in Figure~\ref{fig:rescale}. 
In this example, the CDs for the minimisation of $R$
behave similarly to a CD for the minimisation of $D$ with simple rescaling,   
so that the observed acceleration is induced by the simple-rescaling mechanism alone. 
\end{example}

\begin{figure}[!htbp]
\centering
\includegraphics[width=0.6\linewidth]{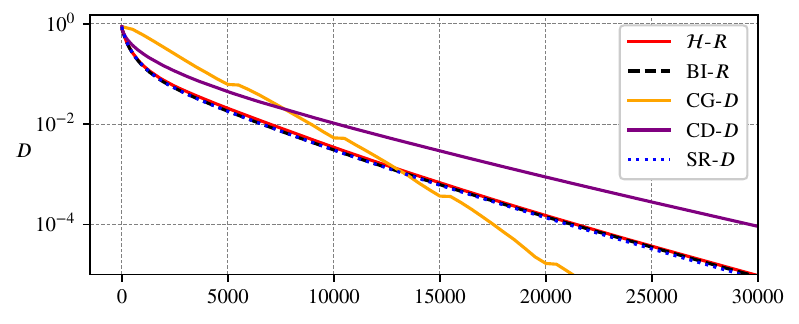}
\caption{
For $\BfQ=\BfX\BfX^{T}$, with $m=650$, $\BfX \sim U(-2,3)$ and $\bdc \sim U(3,5)$, 
decay of $D$ as a function of the number of iterations for the considered minimisation strategies; 
see Example~\ref{ex:Examp4}. }
\label{fig:rescale}
\end{figure}

\begin{example}\label{ex:Examp5}
We consider a set of $N=500$ random points uniformly distributed in $[0,1]^{5}$; 
we denote by $\BfK$ the kernel matrix defined by this set of points 
and a kernel of the form $(x,x')\mapsto\exp(-\zeta\|x-x'\|^{2})$, with $\zeta=0.13$ 
(Gaussian kernel; $\|.\|$ is the Euclidean norm of $\BR^{5}$);   
we also define $\bdv\sim U(0,1)$. We then set 
\[
\BfQ=\BfK+\gamma\BfI_{N}+\beta\one_{N}\one_{N}^{T}
\quad\text{and}\quad
\bdc=\bdv+\delta\one_{N}, 
\]
with $\gamma=1$ 
and either $\beta=\delta=0$ (Case~1), 
$\beta=0$ and $\delta=100$ (Case~2), or 
$\beta=1$ and $\delta=100$ (Case~3). 
The values of the corresponding acceleration terms are 
$\correcT_{\infty}\approx 1.00$, $1.62$ and $2.39$, respectively.  
The results are presented in Figure~\ref{fig:additive_consts}. 
We observe that when compared to the other strategies considered, 
the efficiency of the CDs for the minimisation of $R$
increases with the value of $\correcT_{\infty}$. 
\end{example}

\begin{figure}[!h]
\centering
\includegraphics[width=1\linewidth]{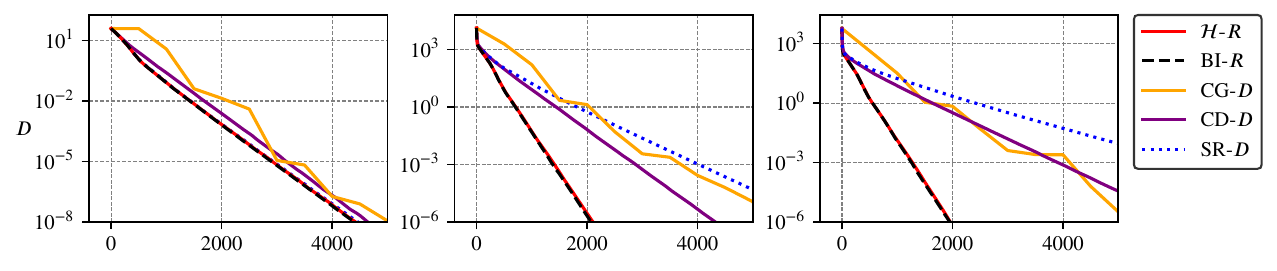}
\vspace*{-0.5cm}
\caption{Decay of the map $D$ as a function of the number of iterations for the considered minimisation strategies. 
The three underlying cases are described in Example~\ref{ex:Examp5} (Cases~1, 2 and 3, left to right). }
\label{fig:additive_consts}
\end{figure}

\begin{example}\label{ex:Examp6}
We consider the sparse matrix $\BfM$ corresponding to the 
\texttt{1138\_bus} instance of the SuiteSparse Matrix Collection; 
see \cite{davis2011university}. We have $N=1{,}138$. 
We set $\BfQ=\BfM+\gamma\BfI_{N}$ with $\gamma = 1$, and $\bdc\sim U(-1,1)$.  
The results are presented in Figure~\ref{fig:powernetworkproblem}.  
In this particular example, we have 
$\mathfrak{a}_{\infty}\approx\mathfrak{a}_{\infty}^{\text{up}}\approx 1$; 
nevertheless, when compared to a CD for the minimisation of $D$, 
the CDs for the minimisation of $R$ display an acceleration 
induced by simple rescaling. 
\end{example}

\begin{figure}[!h]
\centering
\includegraphics[width=0.6\linewidth]{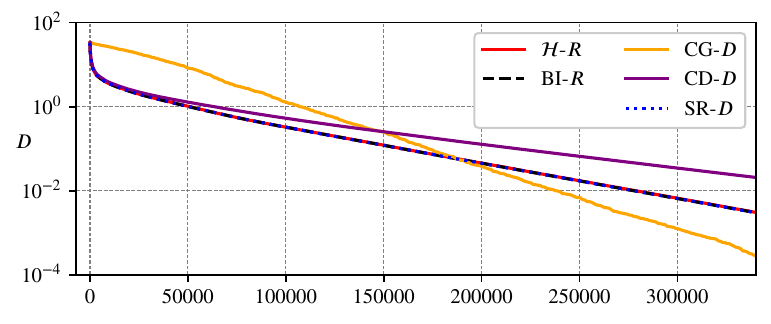}
\vspace*{-0.5cm}
\caption{Decay of the map $D$ as a function of the number of iterations for the various minimisation strategies considered. 
The underlying quadratic map is defined from a sparse symmetric positive-definite matrix; 
see Example~\ref{ex:Examp6}. }
\label{fig:powernetworkproblem}
\end{figure}

\section{Discussion}
\label{sec:ConcluDiscuss}
We investigated the properties of a class of piecewise-fractional maps arising 
from the introduction of an invariance under rescaling into convex quadratic maps,  
and studied the minimisation of such relaxed maps via CDs with gradient-based rules. 
In this setting, and when compared to equivalent CD strategies for quadratic minimisation,
we provided theoretical and empirical evidence supporting the existence, under specific conditions, 
of an asymptotic-acceleration phenomenon inherent to the minimisation of the relaxed maps   
(see Theorem~\ref{thm:AsympCorr} and Section~\ref{sec:Experiments}).    
Our experiments also demonstrate that the minimisation of the relaxed maps can in some circumstances 
benefit from a non-asymptotic acceleration analogous to the rescaling 
of the iterates of CDs for quadratic minimisation (see Remark~\ref{rem:MiniDSimpRescall}). 
From a computational standpoint and according to our investigations, 
the efficiency, in terms of number of iterations, 
of the considered CDs for the minimisation of the relaxed maps 
appears to be at worst comparable to their counterparts applied to quadratic minimisation, 
and the gains induced by minimising the relaxed maps 
sometimes substantially outweigh 
the cost of the additional operations involved (see Remark~\ref{rem:AmountWork}).   
Gaining a deeper understanding of the properties of the quadratic maps 
for which such accelerations materialise could help further support the deployment of the proposed methodology.    
The extent to which the discussed accelerations pertain in a stochastic setting 
(randomised CDs) could also be an interesting avenue for future research. 

\appendix 

\section{Proof of Theorem~\ref{thm:RmapPseudoCvx}}
\label{sec:ProofThmPseudoCvx}
\begin{proof}
For $\bdz=(1-t)\bdx+t\bdu$, 
$\bdx$ and $\bdu\in\BR^{N}$, $t\in[0,1]$, 
there always exists $s\geqslant 0$ and $t'\in[0,1]$ such that  
${s\bdz=(1-t')s_{\bdx}\bdx+t's_{\bdu}\bdu}$. Indeed,  
\begin{itemize}
\item for $\bdx\not\in\coneC$ and $\bdu\not\in\coneC$, the condition is verified for $s=0$ and any $t'\in[0,1]$; 
\item for $\bdx\not\in\coneC$ and $\bdu\in\coneC$, the condition is verified for $s=0$ and $t'=0$; 
\item for $\bdx\in\coneC$ and $\bdu\not\in\coneC$, the condition is verified for $s=0$ and $t'=1$; 
\item for $\bdx\in\coneC$ and $\bdu\in\coneC$, we have $\coni\{\bdx,\bdu\}=\coni\{s_{\bdx}\bdx,s_{\bdu}\bdu\}$. 
\end{itemize}
From the definition of $R$ and the convexity of $D$, we obtain
\[
R(\bdz)
\leqslant D(s\bdz)
\leqslant (1-t')D(s_{\bdx}\bdx) + t' D(s_{\bdu}\bdu)
=(1-t')R(\bdx) + t' R(\bdu)
\leqslant\max\{R(\bdx), R(\bdu)\}, 
\]
and $R$ is therefore quasiconvex on $\BR^{N}$. 

Let $\bdx$ and $\bdu\in\coneC$ be such that $\Lambda(\bdx;\bdu-\bdx)\geqslant 0$;  
as $\bdx^{T}(s_{\bdx}\BfQ\bdx-\bdc)=0$, 
the latter condition reads $\bdu^{T}(s_{\bdx}\BfQ\bdx-\bdc)\geqslant0$, and so 
\begin{equation}\label{eq:condiGradPseuso1}
(\bdc^{T}\bdx)(\bdu^{T}\BfQ\bdx)\geqslant (\bdx^{T}\BfQ\bdx)(\bdc^{T}\bdu).   
\end{equation}
As $\bdx$ and $\bdu\in\coneC$, we have $\bdc^{T}\bdx>0$, $\bdx^{T}\BfQ\bdx>0$ and $\bdc^{T}\bdu>0$,  
so that \eqref{eq:condiGradPseuso1} entails ${\bdu^{T}\BfQ\bdx>0}$; 
we also have $\bdu^{T}\BfQ\bdu>0$.   
By CS, we get 
$(\bdu^{T}\BfQ\bdx)^{2}\leqslant (\bdx^{T}\BfQ\bdx)(\bdu^{T}\BfQ\bdu)$, 
which combined with \eqref{eq:condiGradPseuso1} gives 
\[
\frac{(\bdc^{T}\bdx)^{2}}{(\bdx^{T}\BfQ\bdx)^{2}} 
\geqslant \frac{(\bdc^{T}\bdu)^{2}}{(\bdu^{T}\BfQ\bdx)^{2}}
\geqslant \frac{(\bdc^{T}\bdu)^{2}}{(\bdx^{T}\BfQ\bdx)(\bdu^{T}\BfQ\bdu)}. 
\]
We hence obtain $(\bdc^{T}\bdu)^{2}/(\bdu^{T}\BfQ\bdu)\leqslant (\bdc^{T}\bdx)^{2}/(\bdx^{T}\BfQ\bdx)$,
that is, $R(\bdx)\leqslant R(\bdu)$, 
and $R$ is therefore pseudoconvex on $\coneC$. 
\end{proof}

\vspace{0.5\baselineskip}
\noindent\textbf{Funding:} Alexandra Zverovich thankfully acknowledges support from the Additional Funding Programme for Mathematical Sciences, delivered by EPSRC (EP/V521917/1) and the Heilbronn Institute for Mathematical Research.

\bibliographystyle{plain}
\bibliography{Ref_Quad_Rescale}
\end{document}